\author{Fred Espen Benth, Nils Detering, Luca Galimberti}
\address{Fred Espen Benth \\
University of Oslo\\
Department of Mathematics \\
P.O. Box 1053, Blindern\\
N--0316 Oslo, Norway}
\email[]{fredb\@@math.uio.no}
\address{Nils Detering \\ 
Heinrich Heine University D\"usseldorf\\
Department of Mathematics\\
Universit\"atsstrasse 1, 40225 D\"usseldorf, Germany}
\email[]{nils.detering\@@hhu.de}
\address{Luca Galimberti \\ 
King's College London\\
Department of Mathematics\\
Strand Building, WC2R 2LS, London, UK}
\email[]{luca.galimberti\@@kcl.ac.uk}
\newtheorem{theorem}{Theorem}[section]
\newtheorem{definition}[theorem]{Definition}
\newtheorem{proposition}[theorem]{Proposition}
\newtheorem{assumption}[theorem]{Assumption}
\newtheorem{corollary}[theorem]{Corollary}
\newtheorem{remark}[theorem]{Remark}
\newcommand{\R}{\mathbb R}
\newcommand{\N}{\mathbb N}
\newcommand{\EE}{\mathbb E} 
\newcommand{\norm}[1]{\left\lVert#1\right\rVert}
\newcommand{\abs}[1]{\left |#1\right|}
\newcommand{\X}{\mathfrak X}
\DeclareMathOperator{\Span}{span}
\newcommand{\ft}{F^t}
\newcommand{\fth}{F^{t,K,\phi}}
\date{\today}
\title[Structure-informed operator learning for parabolic PDEs]{Structure-informed operator learning for parabolic Partial Differential Equations}
\begin{document}
\begin{abstract}
In this paper, we present a framework for learning the solution map of a backward parabolic Cauchy problem. The solution depends continuously but nonlinearly on the final data, source, and force terms, all residing in Banach spaces of functions. We utilize Fr\'echet space neural networks (\cite{FrechetNN}) to address this operator learning problem. Our approach provides an alternative to Deep Operator Networks (DeepONets), using basis functions to span the relevant function spaces rather than relying on finite-dimensional approximations through censoring. With this method, structural information encoded in the basis coefficients is leveraged in the learning process. This results in a neural network designed to learn the mapping between infinite-dimensional function spaces. Our numerical proof-of-concept demonstrates the effectiveness of our method, highlighting some advantages over DeepONets.
\end{abstract}
\maketitle 

\section{Introduction}
In this paper we provide a 
framework for learning the solution map of a backward parabolic Cauchy problem incorporating structural information about the final datum, source, and the force term. The solution of the Cauchy problems depends in a nonlinear, but continuous way on the final datum, the source and force terms, which are all functions 
living in appropriate Banach spaces. We propose to use the recently introduced neural networks in Fr\'echet spaces (see \cite{FrechetNN}), an infinite dimensional neural network structure,  to solve 
this operator learning problem.

Our approach provides an alternative to the operator learning method Deep Operator Networks (DeepONets), studied by \cite{LuLu-arxiv} and \cite{deepOnet}. DeepONet extends the shallow network for operator learning proposed and analysed in \cite{ChenChen}. The validity of these operator learning approaches is resting on the universal approximation theorem (see \cite{ChenChen}, \cite{deepOnet} and more recently a generalisation  by \cite{Lanthaler} to measurable operators).  Instead of using finite-dimensional neural networks approximating sampled (also called censored) expressions of the input functions in the operator in question, as done in DeepONets, we 
make use of the information contained in the basis functions spanning the relevant function spaces. We build a neural network which is learning the map between these function spaces expressed  by their basis functions. An infinite dimensional activation function allows us to set up a deep neural network that preserves the structural information encoded in the basis coefficients when processing through the layers.

Our approach rests on a truly infinite dimensional neural network, reflecting that we are approximating continuous nonlinear operators between infinite dimensional spaces. Implemented on a computer, we sample a finite set of basis functions in the training rather than censoring the input functions to have finite dimensional approximations. We refer also to \cite[Sect. 2, p. 9]{anandkumar-fourier2} where a similar idea was mentioned but not further explored.

Often linear operators between function spaces can be expressed as integral operators, for example as convolutions. An approximation of such integral operators using graph kernels is proposed in 
\cite{anandkumar-kernel} for operator learning of partial differential equations. The authors take an infinite dimensional perspective in learning operator maps from various parameters into the solution, viewed as continuous mappings between function spaces of Sobolev type. The affine transform-part of the neural network is viewed as an integral kernel operator, and a discrete version of this is mapped to the next layer by a finite dimensional activation function. These ideas are further expanded into Fourier neural operators (see \cite{anandkumar-fourier1, anandkumar-fourier2}, where the kernel is represented by the Fourier transform and its inverse to obtain computationally attractive representations for estimating the kernel operator. \cite{LaplaceNO-karniadakis} propose to use the Laplace transform instead of the Fourier transform, taking advantage of the pole-residue relationship between the input and output space of the operator to be approximated.
In \cite{anandkumar-fourier2} a universal approximation theorem is shown for networks learning nonlinear operators between certain Banach spaces of functions with infinite dimensional (integral operator) affine transforms, but finite dimensional activation functions. In a recent paper \cite{anandkumar-PINN-operator} the Fourier neural operator methodology is applied in conjunction with physics informed learning. We refer the reader to the extensive literature review on operator and physics informed learning learning in \cite{anandkumar-PINN-operator} (see also more references in \cite{LaplaceNO-karniadakis}).

In \cite{FrechetNN} a universal approximation theorem was shown which ensures that continuous operators from a Fr\'echet space into a Banach space can be approximated on compacts with Fr\'echet neural networks. In our analysis of the parabolic Cauchy problem, using the classical theory of e.g. \cite{friedman1975}, the solution map can be shown to be Lipschitz continuous from a product of Sobolev spaces into the continuous functions on compacts. Thus, the universal approximation theorem ensures that we can approximate the operator arbitrary well by the Fr\'echet neural network. 
In \cite{Lanthaler} the operators they are interested in training using DeepONet are shown to be Lipschitz continuous.
For the Fourier neural operators studied in \cite{anandkumar-fourier2}, proofs of the integrability properties of the operators seem to be missing for the operators they aim to train that enables them to use of their universal approximation theorem. 
\cite{LaplaceNO-karniadakis} do not address the regularity properties for the operators they study numerically.

In a numerical proof-of-concept, we demonstrate that our proposed approach indeed works out. We compare with the DeepONets, and point out some advantages with our approach. 
We once again would like to emphasise also that we make use of infinite dimensional activation functions, and not finite dimensional ones as in DeepONet and the Fourier neural operator approaches, because for deep learning this allows us to progress structural information from the basis functions throughout the layers.     

Our approach and analysis extends the neural network approach to approximate numerically high dimensional partial differential equations by training using synthetic data generated by stochastic differential equations along with the Feynman-Kac formula, as advocated by \cite{E-Jentzen} and \cite{Jentzen} (see also \cite{Jentzen-survey} for an overview and further references). We can naturally make use of the Feynman-Kac formula also for operator learning problems related to Cauchy problems, as we show in this paper. In passing we remark that
\cite{FS-paper} have made use of similar ideas to price options in energy markets which require an infinite dimensional framework.

Learning the operator mapping certain parameter functions into the solution of partial differential equations is a forward problem, often referred to as ``many query''. A neural operator method allows for fast and efficient computations of the solution for various specifications of the input parameter functions (see \cite{Lanthaler} for a discussion and analysis of the curse of dimensionality in such problems). 
The inverse problem, where observations of the dynamical system in question is available and one wants to back out parameters, is also of interest, and has been empirically investigated in a Bayesian framework in \cite{anandkumar-fourier1}. Rather than learning the operator map for a specific dynamical system, \cite{osher} propose a framework to learn operators connected to a family of differential equations. Empirical evidence demonstrates that commonalities across differential equations reduce the training burden in such an approach.

Our results are presented as follows. In the next section we provide a review of infinite dimensional neural network introduced in \cite{FrechetNN}, collecting some useful material. Section \ref{sect:cont} defines the Cauchy problem relying on the classical analysis of \cite{friedman1975}, and identifies the operator maps that will be the core object of analysis in this paper. To use infinite dimensional neural networks to learn the operator maps, we need continuity properties to hold according to 
the universal approximation theorem. Continuity of the nonlinear operator maps are analysed and shown for Sobolev spaces in this Section. A numerical example is given in Section \ref{sect:numerics}, providing a proof-of-concept. Here we are
benchmarking our proposed method with the DeepONets-approach, and provide further extensions and perspectives of our proposed structure-informed operator learning approach.

\section{A brief introduction to neural networks in infinite dimensions}
\label{NN:Intro}

In this Section, we give a brief review of neural networks defined on infinite dimensional spaces, 
following the approach in \cite{FrechetNN}. Although \cite{FrechetNN} consider networks on Fr\'echet spaces, we focus on the case of real Banach spaces in the account given here. 

Let $\mathfrak{X}$ and $\mathfrak{Y}$ be two real Banach spaces with norms denoted by $\Vert\cdot\Vert_{\mathfrak{X}}$ and $\Vert\cdot\Vert_{\mathfrak{Y}}$, resp. We are interested in learning continuous nonlinear operators $F:\mathfrak{X}\rightarrow\mathfrak{Y}$ by neural networks. We denote $C(\mathfrak{X};\mathfrak{Y})$ the space of such continuous operators, equipped with the topology of uniform convergence on compacts. If $\mathfrak{Y}=\mathbb R$, we use the simpler notation $C(\mathfrak{X})$ to denote $C(\mathfrak{X};\mathbb R)$.  

Let us start with defining a one layer real-valued neural network on
$\mathfrak{X}$. Let $A\in\mathcal L(\mathfrak{X})$, i.e. a linear and continuous operator $A:\mathfrak{X}\rightarrow\mathfrak{X}$, and $\beta\in\mathfrak{X}$. Thus, $\mathfrak{X}\ni \xi\rightarrow A\xi+\beta\in\mathfrak{X}$ is an {\it affine} transform on $\mathfrak{X}$. Introduce an {\it activation function}  $\sigma:\mathfrak{X}\rightarrow\mathfrak{X}$ being continuous, and define a {\it neuron} $\mathcal N_{\ell,A,\beta}:\mathfrak{X}\rightarrow\mathbb R$ to be the map 
\begin{equation}
    \mathcal N_{\ell,A,\beta}(\xi)=\ell(\sigma(A\xi+\beta))
\end{equation}
for $\ell\in\mathfrak{X}'$. Here, $\mathfrak{X}'$ is the (topological) dual of $\mathfrak{X}$, i.e., the space of continuous linear functionals $\ell:\mathfrak{X}\rightarrow\mathbb R$. A one-layer neural network of width $M\in \mathbb N$ is given as a linear superposition of $M$ such neurons,
\begin{equation}
\label{nn-def-real}
\mathcal N(\xi):=\sum_{j=1}^M\mathcal N_{\ell_j,A_j,\beta_j}(\xi).
\end{equation}

As activation functions $\sigma$, we restrict our attention to the "sigmoidal" class. As we operate with infinite dimensional activation functions, the sigmoidal property requires some care. Here is \cite[Def. 2.6]{FrechetNN}, where a separation property is introduced for continuous $\sigma:\mathfrak{X}\rightarrow\mathfrak{X}$:
\begin{definition}[Separation property]
There exist $\psi\in\mathfrak{X}'\backslash\{0\}$ and $\nu_+, \nu_{-}, \nu_0\in\mathfrak{X}$ 
such that either
$\nu_{+}\notin\text{span}\{\nu_0,\nu_{-}\}$
or 
$\nu_{-}\notin\text{span}\{\nu_0,\nu_{+}\}$
and such that
\begin{align*}
&\lim_{\lambda\rightarrow\infty}\sigma(\lambda \xi)=\nu_+, \text{ if } \xi\in\Psi_+ \\
&\lim_{\lambda\rightarrow\infty}\sigma(\lambda \xi)=\nu_{-}, \text{ if } \xi\in\Psi_{-} \\ 
&\lim_{\lambda\rightarrow\infty}\sigma(\lambda \xi)=\nu_0, \text{ if } \xi\in\Psi_0,
    \end{align*}
where $\Psi_+:=\{\xi\in\mathfrak{X}\,\vert\,\psi(\xi)>0\}$, $\Psi_{-}:=\{\xi\in\mathfrak{X}\,\vert\,\psi(\xi)<0\}$ and $\Psi_0:=\ker(\psi)$.
\end{definition}
We also need the activation functions to be bounded, in the sense that its image $\sigma(\mathfrak{X})$ is a bounded subset of $\mathfrak{X}$. Indeed, one sees that with continuous and bounded activation functions, the one-layer neural networks that we have introduced are also continuous and bounded. 

From \cite[Example 2.13]{FrechetNN} we find a specification of a class of bounded and continuous activation functions which enjoy the separation property. Let us recall this example: fix $\psi\in\mathfrak{X}'\backslash\{0\}$ and suppose we have a sequence $(\widetilde{\sigma}_j)_{j\in\mathbb N}$ of continuous functions
$\widetilde{\sigma}_j:\mathbb R\rightarrow\mathbb R$ with the properties
$\widetilde{\sigma}_j(0)=0$,
$$
\lim_{x\rightarrow\infty}\widetilde{\sigma}_j(x)=1,\qquad \lim_{x\rightarrow-\infty}\widetilde{\sigma}_j(x)=0,
$$
for all $j\in\mathbb N$, where additionally we suppose $\sup_{j\in\mathbb N}\Vert\widetilde{\sigma}_j\Vert_{\infty}<\infty$. Given a summable sequence
$(\zeta_j)_{j\in\mathbb N}\subset\mathfrak{X}$ for which $0\neq\zeta:=\sum_{j=1}^{\infty}\zeta_j$, define the function $\sigma:\mathfrak{X}\rightarrow\mathfrak{X}$ by
\begin{equation}\label{eq: summable sequence}
\sigma(\xi)=\sum_{j=1}^{\infty}\widetilde{\sigma}_j(\psi(\xi))\zeta_j
\end{equation}
One can show (see \cite[Example 2.13]{FrechetNN}) that $\sigma$ is continuous, bounded and separating, and thus suitable as an activation function. Notice that $\sigma$ is a sum of $\widetilde{\sigma}_j(\psi(\xi))\zeta_j$, where $\widetilde{\sigma}_j$ are classical sigmoidal activation functions. We first let a global linear functional act on $\xi$ before it goes as input into the activation function
$\widetilde{\sigma}_j$, having a real value. We use this real value to scale the element $\zeta_j$ in the Banach space to build up an activation function which truly maps $\mathfrak{X}$ into itself. In a simple setting, one can choose only one such element in the series (or a finite number of such), defining the activation function 
$\sigma(\xi):=\widetilde{\sigma}(\psi(\xi))\zeta$. We mention in passing that there exist other examples of infinite dimensional
activation functions
(see \cite{FrechetNN}). 

To obtain neural networks with values in the Banach space $\mathfrak{Y}$, we simply aggregate the real-valued neural networks we
have defined in \eqref{nn-def-real}, 
scaled by independent unit vectors in $\mathfrak{Y}$. To be more precise, given $d$ independent unit vectors $\mu_1,\ldots,\mu_d\in\mathfrak{Y}$ and neural networks $\mathcal N^{(1)},\ldots,\mathcal N^{(d)}$, define the $\mathfrak{Y}$-valued neural network $\mathcal N_d:\mathfrak{X}\rightarrow\mathfrak{Y}$ as
\begin{equation}
    \label{nn-def-inf}
    \mathcal N_d(\xi)=\sum_{i=1}^d\mathcal N^{(i)}(\xi)\mu_i. 
\end{equation}
For such neural networks we have the following {\it Universal Approximation Theorem} (see \cite[Thm. 3.2]{FrechetNN}):
\begin{theorem}
    \label{thm:UAT}
    Assume $\sigma:\mathfrak{X}\rightarrow\mathfrak{X}$ is a continuous, bounded and separating activation function and suppose that $F\in C(\mathfrak{X};\mathfrak{Y})$. Then, for given compact set $\mathcal K\subset\mathfrak{X}$ and $\epsilon>0$ there exist $d\in\mathbb N$, $d$ independent unit vectors $\mu_1,\ldots,\mu_d\in\mathfrak{Y}$ and $d$ real-valued neural networks $\mathcal N^{(1)},\ldots,\mathcal N^{(d)}$ such that
    $$
    \sup_{\xi\in\mathcal K}\Vert F(\xi)-\mathcal N_d(\xi)\Vert_{\mathfrak{Y}}<\epsilon,
    $$
    where $\mathcal N_d$ is defined in \eqref{nn-def-inf}.
\end{theorem}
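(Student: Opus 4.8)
The plan is to prove Theorem~\ref{thm:UAT} in two stages: reduce the $\mathfrak Y$-valued statement to the density of \emph{real-valued} one-layer networks in $C(\mathcal K)$, and then establish that density by a Hahn--Banach / annihilating-measure argument in which the separation property of $\sigma$ plays the role of Cybenko's discriminatory hypothesis. For the reduction, note that $F(\mathcal K)\subset\mathfrak Y$ is compact, hence totally bounded; I would fix a finite set $y_1,\dots,y_m$ whose open $\delta$-balls cover $F(\mathcal K)$ and a continuous partition of unity $\{\rho_i\}_{i=1}^m$ subordinate to that cover, and set $G:=\sum_{i=1}^m(\rho_i\circ F)\,y_i$. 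Then $\sup_{\xi\in\mathcal K}\|F(\xi)-G(\xi)\|_{\mathfrak Y}\le\delta$ and $G$ takes values in $V:=\Span\{y_1,\dots,y_m\}$. Picking a normalized basis $\mu_1,\dots,\mu_d$ of $V$ and expanding the $y_i$ in it, $G(\xi)=\sum_{k=1}^d g_k(\xi)\mu_k$ with each $g_k$ a continuous real function on $\mathcal K$, namely a real-linear combination of the $\rho_i\circ F$. Since the one-layer networks \eqref{nn-def-real} are continuous, it then suffices to approximate each $g_k$ uniformly on $\mathcal K$ within $\epsilon/(2d)$ by such a network $\mathcal N^{(k)}$; forming $\mathcal N_d:=\sum_k\mathcal N^{(k)}\mu_k$ as in \eqref{nn-def-inf}, using $\|\mu_k\|_{\mathfrak Y}=1$ and the triangle inequality gives $\sup_{\mathcal K}\|F-\mathcal N_d\|_{\mathfrak Y}\le\delta+\tfrac12\epsilon<\epsilon$ for $\delta$ small. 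So the remaining task is that the linear space $\mathcal S$ of real-valued one-layer networks is dense in $C(\mathcal K)$.

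To that end I would argue by contradiction: Hahn--Banach and the Riesz representation theorem supply a nonzero finite signed regular Borel measure $\mu$ on $\mathcal K$ annihilating every neuron $\xi\mapsto\ell(\sigma(A\xi+\beta))$, and the heart of the matter is to test $\mu$ against cleverly chosen neurons. Let $\psi,\nu_+,\nu_-,\nu_0$ be the data of the separation property of $\sigma$. Given $\ell\in\mathfrak X'$ and $b\in\mathbb R$, pick $\xi_1\in\mathfrak X$ with $\psi(\xi_1)=1$ and put $A\xi:=\ell(\xi)\,\xi_1$, a rank-one element of $\mathcal L(\mathfrak X)$, and $\beta:=b\,\xi_1$, so that $\psi(A\xi+\beta)=\ell(\xi)+b$. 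Assuming without loss of generality $\nu_+\notin\Span\{\nu_0,\nu_-\}$, Hahn--Banach furnishes $\ell^\ast\in\mathfrak X'$ with $\ell^\ast(\nu_+)=1$ and $\ell^\ast(\nu_0)=\ell^\ast(\nu_-)=0$. For every $\lambda>0$ the map $\xi\mapsto\ell^\ast(\sigma(\lambda A\xi+\lambda\beta))$ is a neuron, hence lies in $\mathcal S$, and by the separation property it converges, as $\lambda\to\infty$, pointwise in $\xi$ to $\mathbf 1\{\ell(\xi)+b>0\}$. Since $\sigma(\mathfrak X)$ is bounded these neurons are uniformly bounded, so dominated convergence against $|\mu|$ gives $\int_{\mathcal K}\mathbf 1\{\ell(\xi)+b>0\}\,d\mu=0$ for all $\ell$ and $b$, i.e.\ each pushforward $\ell_\ast\mu$ is a finite signed measure on $\mathbb R$ vanishing on all half-lines $(c,\infty)$, whence $\ell_\ast\mu=0$ for every $\ell\in\mathfrak X'$.

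From $\ell_\ast\mu=0$ one obtains $\int\big(\sum_{i}t_i\ell_i(\xi)\big)^k\,d\mu=0$ for all finite families $\ell_1,\dots,\ell_r\in\mathfrak X'$, all scalars $t_i$, and all $k\in\mathbb N$; reading this as a polynomial identity in $(t_1,\dots,t_r)$ shows that every mixed moment $\int\prod_i\ell_i(\xi)^{\alpha_i}\,d\mu$ vanishes. Hence $\mu$ annihilates the unital subalgebra of $C(\mathcal K)$ generated by $\{\ell|_{\mathcal K}:\ell\in\mathfrak X'\}$; this subalgebra separates the points of $\mathcal K$ by Hahn--Banach, so by Stone--Weierstrass it is dense in $C(\mathcal K)$, forcing $\mu=0$ --- a contradiction. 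Therefore $\mathcal S$ is dense in $C(\mathcal K)$, and combining this with the reduction above proves the theorem.

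I expect the main obstacle to be the middle step: manufacturing honest half-space indicators out of an abstract bounded separating activation. It is precisely here that the asymmetric clause ``$\nu_+\notin\Span\{\nu_0,\nu_-\}$ or $\nu_-\notin\Span\{\nu_0,\nu_+\}$'' is indispensable, since it is what makes possible the choice of $\ell^\ast$ turning $\sigma(\lambda\,\cdot)$ into a clean Heaviside limit rather than a constant or a symmetric profile. One must also avoid trying to make the approximation uniform: $\sigma(\lambda\,\cdot)\to\nu_\pm$ is not uniform near $\ker\psi$, which is why the whole argument is run at the level of the annihilating measure, where only pointwise convergence together with the boundedness of $\sigma$ is needed. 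Alternatively one could reduce to the classical finite-dimensional universal approximation theorem via Stone--Weierstrass, but the duality route sidesteps these uniform-convergence subtleties; the partition-of-unity reduction, the pushforward/moment computation, and Stone--Weierstrass are otherwise routine.
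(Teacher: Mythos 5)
Your argument is correct, but note that the paper itself gives no proof of Theorem~\ref{thm:UAT}: it is imported verbatim from \cite[Thm.~3.2]{FrechetNN}, so there is nothing internal to compare against. Your reconstruction — reducing the $\mathfrak Y$-valued case to scalar networks via total boundedness of $F(\mathcal K)$ and a partition of unity, then running a Cybenko-style Hahn--Banach/Riesz duality argument in which the separation property, tested along rank-one affine maps $\xi\mapsto(\ell(\xi)+b)\xi_1$ and a functional $\ell^\ast$ isolating $\nu_+$, manufactures half-space indicators and forces every pushforward $\ell_\ast\mu$ to vanish, followed by the moment/Stone--Weierstrass step — is sound and is essentially the same strategy as in the cited reference.
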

This universal approximation theorem is the key to the applicability of infinite dimensional neural networks in operator-learning tasks. Remark that if we let $\mathfrak{Y}=\mathbb R$, then we can re-state the universal approximation theorem as follows: for given $\epsilon>0$ and compact subset $\mathcal K\subset\mathfrak{X}$, there exists $M\in\mathbb N$ such that
$$
\sup_{\xi\in\mathcal K}\vert F(\xi)-\mathcal N(\xi)\vert<\epsilon,
$$
where $\mathcal N$ is defined in \eqref{nn-def-real}. By saying that there exists an $M$, we are in reality also saying that there exist elements $\ell_j\in\mathfrak{X}'$, $A_j\in\mathcal L(\mathfrak{X})$ and $\beta_j\in\mathfrak{X}$ 
for $j=1,\ldots,M$. Obviously, all these elements, including $M$, depend on $\epsilon$ and $\mathcal K$.  

To implement the neural network $\mathcal N_d$ on a computer we need a finite dimensional version thereof. In dealing with partial differential equations and operator learning tasks, Sobolev spaces appear naturally. These Sobolev spaces are in most cases separable Banach spaces and carry a Schauder basis, which one can exploit to create finite dimensional networks from 
our infinite dimensional $\mathcal N_d$. Indeed, as basis functions contain structural information about the functions that go into the operator, one can use this information in the training. We provide some more details on this idea which is key to our method. 

Suppose $\mathfrak{X}$ is a {\it separable} Banach space, with Schauder basis denoted by $(e_k)_{k\in\mathbb N}$. Thus, for any $\xi\in\mathfrak X$ we have unique coefficients $a_k\in\R,\,k=1,2,\dots$ such that  
$\xi=\sum_{k=1}^{\infty}a_k e_k$. Without loss of generality, we can assume that $\Vert e_k\Vert_\X=1$ for all $k$. We define the canonical linear continuous projectors
\[
p_k:\X\to \R,\quad \xi\mapsto a_k,\quad k\in \N,
\]
 and the projection operators 
\begin{equation}
    \Pi_N:\mathfrak X\rightarrow\text{span}\{e_1,\ldots,e_N\},\qquad \xi\mapsto \sum_{k=1}^Np_k(\xi)e_k,
\end{equation}
for $N\in\mathbb N$. The projection operators are also linear, bounded, and $\Pi_N$ converges uniformly on compacts $\mathcal K\subset\mathfrak{X}$ when $N\rightarrow\infty$ (see \cite[Thm. 9.6, p. 115]{Schaefer}). 

Given a network $\mathcal N$ as in \eqref{nn-def-real} and $N\in\mathbb N$, define a finite dimensional network $\mathcal N_{N}$ as 
\begin{equation}
    \mathcal N_{N}(\xi)=\sum_{j=1}^M(\ell_j\circ\Pi_N)(\sigma(\Pi_NA_j\Pi_N\xi+\Pi_N\beta_j)).
\end{equation}
Let us briefly explain the reason why this architecture can be seen as a finite dimensional object: first of all, for any $N\in\N$, $\Pi_N\xi\in\X$ can be identified with its truncated expansion $(p_1(\xi),\dots, p_N(\xi))\in\R^N$. Secondly, the restrictions of the operators $\Pi_N\circ A_j$ and $\ell_j$ to $\Span\{e_1,\dots,e_N\}$ are finite dimensional, because the action of $\ell_j$ will be prescribed by the scalars $\ell_j(e_1),\dots,\ell_j(e_N)$, and the action of $\Pi_N\circ A_J\circ\Pi_N$ will be specified by the matrix $\left\{ p_m(A_je_k)\right\}_{m,k=1}^N$. The sum above thus resembles a classical neural network. However, instead of the typical one dimensional activation function, the function $\Pi_N \circ \sigma$ restricted to $\Span\{e_1,\dots ,e_N\}$ is multidimensional. The terms appearing in the sum can now easily be programmed in a computer: we refer to Section \ref{sect:numerics} for further details.

Assuming additionally that the activation function $\sigma$ is Lipschitz (which holds if $\widetilde{\sigma}_j$ is Lipschitz for each $j\in\mathbb N$ and the Lipschitz constants satisfy a uniform bound $\sup_{j\in\mathbb N} Lip(\widetilde{\sigma}_j)<\infty$: see \eqref{eq: summable sequence}), then the Universal Approximation Theorem is valid for $\mathcal N_{N}$ (see \cite[Prop. 4.1]{FrechetNN}).

The networks $\mathcal N$ and $\mathcal N_{N}$ above can also be extended to deep neural networks. An infinite dimensional network with $n\in\mathbb N$ layers can be constructed from neurons of the form
\begin{equation}
    \mathcal N_{\ell,\mathbf{\mathcal A}}(\xi):=\ell(\sigma\circ \mathcal A_1\circ\cdots\circ\sigma\circ \mathcal A_n)(\xi)
\end{equation}
with $\mathbf{\mathcal A}=(\mathcal A_1,\ldots,\mathcal A_n)$ a vector of affine transformation on $\mathfrak{X}$ given as $\mathcal A_i(\xi):=A_i\xi+\beta_i$ for $i=1,\ldots,n$. 
Indeed, the span of such {\it deep neurons} is dense in the space of continuous operators on $\mathfrak{X}$, i.e., these deep infinite dimensional neural networks are universal approximants (see \cite[Prop. 5.2]{FrechetNN} and \cite[Prop. 2.10]{FS-paper}). Notice that the universal approximation theorems do not provide any quantification on the number of neurons $M$ nor the depth $n$ in the approximating networks.

\section{A continuity analysis of the nonlinear operator maps}
\label{sect:cont}

In this section, we first define the solution operator for the parabolic Cauchy problem that we aim to solve. To achieve this, we embed the parameters of the Cauchy problem into a suitable Banach space of functions. Next, we analyze the continuity of the operator maps, which act as mappings from a set of parameter functions of interest to solutions. This continuity analysis justifies the use of the Fr\'echet neural network structures introduced in Section \ref{NN:Intro}. Finally, we derive some robustness results for the single-point solution operator.

\subsection{The parabolic Cauchy problem}
\label{subsect:cauchy}

We are going to follow the classical setup provided by  
\cite{friedman1975}. We aim at solving the following backward parabolic Cauchy problem in $\R^n$
\begin{equation}\label{eq: parabolic Cauchy problem}
    \begin{cases}
    Lu + \partial_tu = f(x,t), \quad \text{in } \R^n\times [0,T)\\
    u(x,T) = \phi(x), \quad \text{in } \R^n,
    \end{cases}
\end{equation}
where $0<T<\infty$ and
\begin{equation}\label{eq: elliptic operator}
    Lu = \frac12 \sum_{i,j=1}^n a_{ij}(x,t) \partial^2_{ij}u + \sum_{i=1}^nb_i(x,t)\partial_i u + c(x,t) u.
\end{equation}

We are going to make the following assumptions on the coefficients of $L$, the final datum $\phi$ and the forcing term $f$.
\begin{assumption}
\label{basic-pde-assumption}
The functions $a_{ij}, b_i, c, \phi$ and $f$ satisfy:
\begin{enumerate}
    \item[(i)] There exists a number $\delta>0$ such that $\sum_{i,j=1}^n a_{ij}(x,t)y_iy_j\geq \delta\abs{y}^2$, for any $(x,t)\in \R^n\times [0,T], y \in \R^n$;
    \item[(ii)] $a_{ij}$ and $b_i$ are bounded in $\R^n\times [0,T]$ and Lipschitz continuous in $(x,t)$ in compact subsets of $\R^n\times [0,T]$. The functions $a_{ij}$ are H\"older continuous in $x$, uniformly with respect to $(x,t)\in\R^n\times [0,T]$;
    \item[(iii)] $c$ is bounded in $\R^n\times [0,T]$ and H\"older continuous in $(x,t)$ in compact subsets of $\R^n\times [0,T]$;
    \item[(iv)] $f(x,t)$ is continuous in $\R^n\times [0,T]$, H\"older continuous in $x$ uniformly with respect to $(x,t)\in\R^n\times [0,T]$ and $\abs{f(x,t)}\leq \kappa(1+\abs{x})^{\gamma}$ in $\R^n\times [0,T]$, $\phi(x)$ is continuous in $\R^n$ and $\abs{\phi(x)}\leq \kappa(1+\abs{x})^{\gamma}$, where $\kappa,\gamma$ are positive constants.
\end{enumerate}
\end{assumption}
This set of assumptions ensures that (see \cite[Thm 5.3, p. 148]{friedman1975}) there exists a unique $u\in C(\R^n\times [0,T])\cap C^{1,2}(\R^n\times [0,T))$ solution of \eqref{eq: parabolic Cauchy problem} such that $\abs{u(x,t)}\leq \kappa (1+\abs{x})^{\gamma}$ for some $\kappa>0$. We remark that $\kappa$ is here and throughout a generic constant that is allowed to change according to the context.  
The solution may be represented by the Feynman-Kac formula 
\begin{equation}\label{eq: Feynman-Kac formula}
   \begin{split}
        u(x,t) &= \EE\left[\phi(X_{x,t}(T))\exp\left(\int_t^T c(X_{x,t}(s),s)\,ds\right)\right]\\
        &\quad\quad
        -\EE\left[\int_t^T f(X_{x,t}(s),s) \exp\left(\int_t^s c(X_{x,t}(r),r)\,dr\right) ds\right],
   \end{split}
\end{equation}
for $(x,t)\in \R^n\times [0,T]$, where $t\leq s \leq T$, and
\begin{equation}\label{eq: SDE associated to the parabolic Cauchy problem}
    X_{x,t}(s) = x + \int_t^s b(X_{x,t}(r),r)\,dr+ \int_t^s \eta(X_{x,t}(r),r)\,dW(r) 
\end{equation}
with $\eta\eta^\ast=a$. 
Here, $W$ is an $n$-dimensional Brownian motion defined on a filtered complete probability space $(\Omega,(\mathcal F_t)_{t\in[0,T]},\mathcal F,\mathbb P)$, $\eta\in\R^{n\times n},$ and $\mathbb E[\cdot]$ is the expectation operator with respect to the probability measure $\mathbb P$. Moreover, by \cite[p. 112]{friedman1975}, for any $R>0,0\leq \tau\leq T$, if $\abs{x},\abs{y}\leq R$, there exists $C_{R,T}>0$ such that
\begin{equation}\label{eq: stability estimate for the SDE}
    \EE\left[\sup_{\tau\leq s \leq T} \abs{X_{x,\tau}(s)-X_{y,\tau}(s)}^2\right] \leq C_{R,T} \abs{x-y}^2.
\end{equation}

\subsection{Solution maps and continuity}
Our goal is to learn the nonlinear 
operator map 
\begin{equation}
(\phi,c,f)\stackrel{\ft}{\longmapsto} u(\cdot,t),\quad 0\le t<T
\end{equation}
with the Fr\'echet neural network structures presented in Section \ref{NN:Intro}. In order to do so, we first need a space for $(\phi,c,f)$ and $u(\cdot,t)$ to live in, and then show that the map $(\phi,c,f)\stackrel{\ft}{\longmapsto} u(\cdot,t)$ has the required continuity properties such that we can expect the neural network to approximate the map sufficiently well, in light of the Universal Approximation Theorem \ref{thm:UAT}.

We first introduce a couple of spaces of real valued functions that we will need in the following. We refer the reader to \cite{adams2003sobolev} for these definitions.
For $j\in \N_0$, denote by $C^j (\mathbb{R}^n)$ the space of real valued functions on $\mathbb{R}^n$ whose derivatives up the order $j$ exist. 
Define 
\begin{equation}\label{eq: Cm}
    C^j(\overline{\R^n}) := \{
    v\in C^j(\R^n): \, D^\alpha v \text{ is bounded and uniformly continuous, }\,0\leq \abs{\alpha}\leq j
    \}
\end{equation}
where clearly $D^\alpha v = \partial_{\alpha_1,\dots,\alpha_n}v$ with $\alpha=(\alpha_1,\dots,\alpha_n)\in \N_0^n $. Observe that the notation $C^j(\overline{\R^n})$ arises from the fact that for more general spaces $\mathcal{X}$ instead of $\mathbb{R}^n$, the uniform continuity allows for an extension to the closure $\overline{\mathcal{X}}$. In our case of course $\mathbb{R}^n=\overline{\mathbb{R}^n}$, but $C^j(\overline{\R^n})\neq C^j(\R^n)$.

The vector space $C^j(\overline{\R^n})$ is a Banach space when endowed with the norm given by
$$
\norm{v}_{ C^j(\overline{\R^n})}:=\max_{0\leq\abs{\alpha}\leq j}\sup_{x\in\R^n}\abs{D^\alpha v(x)}.
$$
We further consider for $0<\lambda\leq 1$ also the spaces $C^{j,\lambda}(\overline{\R^n})\subset C^j(\overline{\R^n})$ defined by
\begin{equation}\label{eq: Cml}
    C^{j,\lambda}(\overline{\R^n}) := \{
    v\in C^j(\overline{\R^n}): \, D^\alpha v \text{ satisfies a } \lambda-\text{H\"older condition}\, ,0\leq \abs{\alpha}\leq j
    \}
\end{equation}
i.e. for any $ 0\leq \abs{\alpha}\leq j$ there exists a constant $K\geq 0$ such that
$$
\abs{D^\alpha v(x) - D^\alpha v(y)}\leq K \abs{x-y}^\lambda,\quad x,y\in \R^n.
$$
The space $C^{j,\lambda}(\overline{\R^n})$ is a Banach space when endowed with the norm given by
$$
\norm{v}_{ C^{j,\lambda}(\overline{\R^n})}:=
\norm{v}_{ C^j(\overline{\R^n})}
+
\max_{0\leq\abs{\alpha}\leq j}\sup_{x\neq y}\frac{\abs{D^\alpha v(x) - D^\alpha v(y)}}{\abs{x-y}^\lambda}.
$$
For brevity reasons we write $ C^{j,\lambda}$ rather than $ C^{j,\lambda}(\overline{\R^n})$. We also recall the definition of the Sobolev space $W^{k,p}=W^{k,p}(\R^n),\,k\in\N,1\le p<\infty$: this is the space of all functions $v\in L^p(\R^n)$ such that all their distributional derivatives $D^\alpha v,\,0<\abs{\alpha}\le k$ are functions in $L^p(\R^n)$. A natural norm for these spaces is provided by
\[
\norm{v}_{W^{k,p}}:= \left(
\sum_{0\le\abs{\alpha}\le k} \norm{D^\alpha v}_{L^p}^p
\right)^{1/p}. 
\]
In this way, the spaces become Banach spaces and for $p=2$ Hilbert spaces. These spaces enjoy the following embedding results, known as Sobolev-Morrey embedding: let $j,m\in\N_0$ and
$1\leq p <\infty$.
By part III of Theorem 5.4 and Remark 5.5 (3) \cite[page 98]{adams2003sobolev} the following holds:
\begin{itemize}
    \item If $mp>n>(m-1)p$, then
    \begin{equation}\label{eq: Sob emb 1}
    W^{j+m,p} \hookrightarrow  C^{j,\lambda},\quad 
    0<\lambda \leq m-n/p
    \end{equation}
    with continuity.
     \item If $n = (m-1)p$, then
    \begin{equation}\label{eq: Sob emb 2}
    W^{j+m,p} \hookrightarrow  C^{j,\lambda},\quad 
    0<\lambda <1
    \end{equation}
    with continuity. If $n=m-1$ and $p=1$, then $\lambda$ can be also equal to 1 in the last equation.
\end{itemize}

This result has the following consequence, which we are going to use later: by the continuity of the embedding, we have  
$$
\norm{v}_{C^{j,\lambda}} \leq C_{Sob} \norm{v}_{W^{j+m,p}}, \quad v \in W^{j+m,p}
$$
for some universal constant $C_{Sob}= C_{Sob}(n,j,m,p)$, and so we deduce in particular that
\begin{equation}\label{eq: Hoelder condition j}
    \max_{0\leq\abs{\alpha}\leq j}\sup_{x\neq y}\frac{\abs{D^\alpha v(x) - D^\alpha v(y)}}{\abs{x-y}^\lambda} \leq C_{Sob} \norm{v}_{W^{j+m,p}}.
\end{equation}

From now on, we will assume $j=0$ and that $m$ and $p$ satisfy one of the two conditions above. Thus, $W^{m,p}$ embeds continuously in $C^{0,\lambda}$ for suitable $\lambda$. In this case, \eqref{eq: Hoelder condition j} becomes 
\begin{equation}\label{eq: Hoelder condition}
    \abs{v(x)-v(y)} \leq C_{Sob}\norm{v}_{W^{m,p}} \abs{x-y}^\lambda, \quad x,y\in \R^n, v\in W^{m,p}.
\end{equation}
\begin{remark}
    If we choose $m=1$, then we need $n<p$ and thus $0<\lambda<1-n/p$.
\end{remark}

Going back to \eqref{eq: parabolic Cauchy problem}, we suppose the following to hold: 
\begin{assumption}
For the functions $\phi, c$ and $f$, 
\begin{itemize}
    \item $\phi\in W^{m,p}$
    \item $c$ and $f$ are time-independent and $c,f\in W^{m,p}$. 
\end{itemize}
\end{assumption}
Under this assumption we can conclude from the embeddings that $\phi,c,f\in C^{0,\lambda}$. In view of these embeddings, there exists a unique solution $u$ of \eqref{eq: parabolic Cauchy problem}. 
We fix once for all $0\leq t < T$, and define 
\begin{equation}\label{eq: solution operator (2)}
    \ft: W^{m,p} \times W^{m,p} \times W^{m,p}\to BC(\R^n),\quad (\phi,c,f)\stackrel{\ft}{\longmapsto} u(\cdot,t),
\end{equation}
where $BC(\R^n)$ denotes the space of bounded continuous functions from $\R^n$ to $\R$. Observe that indeed $u\in BC(\R^n\times [0,T])$, because $\phi,c$ and $f$ are bounded and boundedness of $u$ then follows from \eqref{eq: Feynman-Kac formula}.

We have the following continuity result for the operator $F^t$ in \eqref{eq: solution operator (2)} which paves the way for using our neural network in infinite dimension for learning.
\begin{proposition}\label{prop: continuity of the solution operator (2)}
Let $\X:= W^{m,p} \times W^{m,p}\times W^{m,p}$ with $m$ and $p$ satisfying the conditions above. Endow $\X$ with the natural norm
\begin{equation*}
    \norm{(v_1,v_2,v_3)}_\X = 
    \norm{v_1}_{W^{m,p}} + 
     \norm{v_2}_{W^{m,p}} + 
     \norm{v_3}_{W^{m,p}} ,\quad (v_1,v_2,v_3)\in \X.
\end{equation*}
Let $0\leq t < T$. Then, the solution operator $\ft$ defined by \eqref{eq: solution operator (2)} is continuous from $\X$ into $ BC(\R^n)$.
\end{proposition}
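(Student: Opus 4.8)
The plan is to prove the stronger statement that $\ft$ is \emph{locally Lipschitz} on $\X$, which immediately implies continuity. The whole argument rests on the Feynman--Kac representation \eqref{eq: Feynman-Kac formula} (with $c$ and $f$ now time-independent) together with one structural observation: the diffusion $X_{x,t}$ solving \eqref{eq: SDE associated to the parabolic Cauchy problem} depends only on the fixed coefficients $b$ and $\eta$, and \emph{not} on the triple $(\phi,c,f)$. Hence, comparing the solutions attached to two different triples amounts to comparing two deterministic functionals evaluated along the \emph{same} random paths, and the probabilistic content is inert. The only input from the Sobolev theory that is actually needed is the continuous embedding $W^{m,p}\hookrightarrow C^{0,\lambda}\hookrightarrow BC(\R^n)$, giving $\norm{v}_\infty\le C_{Sob}\norm{v}_{W^{m,p}}$. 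Fixing $R>0$ and working with triples in the ball $\{\norm{(\phi,c,f)}_\X\le R\}$, all three data functions have sup-norm at most $C_{Sob}R$, so the exponential weights are controlled pointwise in $\omega$ and uniformly in $x$ and $s\in[t,T]$: $\exp\bigl(\int_t^s c(X_{x,t}(r))\,dr\bigr)\le e^{(T-t)C_{Sob}R}=:\Lambda_R$. (In particular the boundedness of $u$ asserted right after \eqref{eq: solution operator (2)} is re-derived along the way.)

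Next I would take two triples $(\phi,c,f)$, $(\widetilde\phi,\widetilde c,\widetilde f)$ in that ball, with solutions $u$, $\widetilde u$, and decompose $u(x,t)-\widetilde u(x,t)$ into four terms by adding and subtracting mixed expressions: (a)~$\EE[(\phi-\widetilde\phi)(X_{x,t}(T))e^{\int_t^T c}]$; (b)~$\EE[\widetilde\phi(X_{x,t}(T))(e^{\int_t^T c}-e^{\int_t^T\widetilde c})]$; (c)~$-\EE[\int_t^T(f-\widetilde f)(X_{x,t}(s))e^{\int_t^s c}\,ds]$; (d)~$-\EE[\int_t^T\widetilde f(X_{x,t}(s))(e^{\int_t^s c}-e^{\int_t^s\widetilde c})\,ds]$. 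Terms (a) and (c) are bounded directly by $C_{Sob}\Lambda_R\norm{\phi-\widetilde\phi}_{W^{m,p}}$ and $(T-t)C_{Sob}\Lambda_R\norm{f-\widetilde f}_{W^{m,p}}$. For (b) and (d) I would use the elementary inequality $\abs{e^a-e^b}\le e^{\max(\abs a,\abs b)}\abs{a-b}$ applied to the (random) exponents, together with $\abs{\int_t^s(c-\widetilde c)(X_{x,t}(r))\,dr}\le(T-t)\norm{c-\widetilde c}_\infty\le(T-t)C_{Sob}\norm{c-\widetilde c}_{W^{m,p}}$, obtaining bounds proportional to $\norm{c-\widetilde c}_{W^{m,p}}$ with constants depending only on $R,T,t,C_{Sob}$. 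Every one of these estimates is \emph{uniform in} $x\in\R^n$, so taking the supremum over $x$ costs nothing and one arrives at
\[
\norm{\ft(\phi,c,f)-\ft(\widetilde\phi,\widetilde c,\widetilde f)}_{BC(\R^n)}\le C(R,T,t)\,\norm{(\phi-\widetilde\phi,\,c-\widetilde c,\,f-\widetilde f)}_\X ,
\]
which is the desired local Lipschitz bound and hence the claim.

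The computation is essentially bookkeeping of constants; the only step deserving a little care is term~(d), where the exponent $\int_t^s c(X_{x,t}(r))\,dr$ itself varies with the integration variable $s$, so the exponential inequality must be applied \emph{inside} the $ds$-integral before taking the expectation and then the supremum over $x$. I do not foresee a genuine obstacle: membership of $u(\cdot,t)$ in $BC(\R^n)$ --- rather than mere pointwise-in-$x$ finiteness --- is already secured by the classical well-posedness recalled after Assumption~\ref{basic-pde-assumption}, and the stronger H\"older estimate \eqref{eq: Hoelder condition} is not required for this proposition (it enters only later, in the robustness estimates for the single-point operator).
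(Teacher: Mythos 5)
Your proposal is correct, and it in fact proves more than the paper does at this point. The paper's own proof of this proposition is a soft sequential argument: it takes $\phi_k\to\phi$, $c_k\to c$, $f_k\to f$ in $W^{m,p}$, uses the embedding $W^{m,p}\hookrightarrow C^{0,\lambda}$ to upgrade these to uniform convergence on $\R^n$, and then checks item by item that $\phi_k(X_{x,t}(T))$, $\exp\bigl(\int_t^T c_k(X_{x,t}(s))\,ds\bigr)$, etc.\ converge uniformly in $(\omega,x,s)$, invoking the mean value theorem for the exponentials exactly as you do. So the essential ingredients coincide: the Feynman--Kac representation with a diffusion that does not depend on $(\phi,c,f)$, the bound $\norm{v}_\infty\le C_{Sob}\norm{v}_{W^{m,p}}$, and control of $\abs{e^a-e^b}$ by $\abs{a-b}$ on bounded exponents. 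What you do differently is organize these ingredients into an explicit four-term decomposition on a ball of radius $R$ and track constants, yielding a local Lipschitz estimate $\norm{\ft(\phi,c,f)-\ft(\widetilde\phi,\widetilde c,\widetilde f)}_{BC(\R^n)}\le C(R,T,t)\norm{(\phi-\widetilde\phi,c-\widetilde c,f-\widetilde f)}_{\X}$ rather than mere continuity. This is a genuine strengthening of the stated proposition (and is the quantitative statement the introduction alludes to when it says the solution map ``can be shown to be Lipschitz continuous''); the paper only carries out quantitative estimates of this flavour later, in Proposition~\ref{prop: uniform learning of u on small compact subsets (2)}, and there with respect to the spatial variable rather than the data. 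Your handling of term (d), applying the exponential inequality inside the $ds$-integral before taking expectations and the supremum over $x$, is the one place where care is needed, and you have it right; all bounds are indeed uniform in $x$ because they depend on the data only through sup-norms along the paths.
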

\begin{proof}
    Let
\begin{equation*}
    \phi_k \stackrel{W^{m,p}}{\longrightarrow}\phi,\quad c_k \stackrel{W^{m,p}}{\longrightarrow} c,\quad f_k \stackrel{W^{m,p}}{\longrightarrow} f 
\end{equation*}
as $k\to \infty$, and let $u_k$ be the corresponding solution of \eqref{eq: parabolic Cauchy problem}.
We observe the following elementary facts:
\begin{enumerate}
    \item Since $\phi_k\to \phi$ uniformly on $\R^n$ by the embedding into $C^{0,\lambda}$, we have $\phi_k(X_{x,t}(T))\to\phi(X_{x,t}(T))$ uniformly in $(\omega,x)\in\Omega\times\R^n$ as $k\to\infty$.
    \item Similarly, since $c_k\to c$ uniformly on $\R^n$, we have $c_k(X_{x,t}(s))\to c(X_{x,t}(s))$ uniformly in $(\omega,x,s)\in\Omega\times\R^n\times [t,T]$ as $k\to\infty$. Therefore,
    \begin{equation*}
        \int_t^T c_k(X_{x,t}(s))\,ds\to \int_t^T  c(X_{x,t}(s))\,ds
    \end{equation*}
    uniformly in $(\omega,x)\in\Omega\times\R^n$.
    \item Using the mean value theorem and the fact that the quantities $ \int_t^T c_k(X_{x,t}(s))\,ds$ and $ \int_t^T c(X_{x,t}(s))\,ds$ are bounded, it is immediate to see that 
    \begin{equation*}
        \exp\left\{\int_t^T c_k(X_{x,t}(s))\,ds\right\}\to \exp\left\{\int_t^T  c(X_{x,t}(s))\,ds\right\}
    \end{equation*}
    uniformly in $(\omega,x)\in\Omega\times\R^n$ as $k\to\infty$.
    \item Mutatis mutandis, $\int_t^s c_k(X_{x,t}(r))\,dr\to \int_t^s  c(X_{x,t}(r))\,dr$ uniformly in $(\omega,x,s)\in\Omega\times\R^n\times [t,T]$ and once more
    \begin{equation*}
        \exp\left\{\int_t^s c_k(X_{x,t}(r))\,dr\right\}\to \exp\left\{\int_t^s  c(X_{x,t}(r))\,dr\right\}
    \end{equation*}
     uniformly in $(\omega,x,s)\in\Omega\times\R^n\times [t,T]$ as $k\to\infty$.
    \item $f_k\to f$ uniformly on $\R^n$ implies $f_k(X_{x,t}(s))\to f(X_{x,t}(s))$ uniformly in $(\omega,x,s)\in\Omega\times\R^n\times [t,T]$. 
\end{enumerate}
In view of this, we deduce that $\sup_{\R^n}\abs{u_k(x,t)-u(x,t)}\to 0$ as $k\to\infty$. We have proved the proposition. 
\end{proof}
We observe that the space $\X$ defined in Proposition \ref{prop: continuity of the solution operator (2)} above is a separable Banach space, under the natural linear structure of product of spaces. It can be easily supplemented by a Schauder basis, coming from a Schauder basis of $W^{m,p}$ (see e.g. \cite[Ch. 4]{Heil} for a general introduction to basis functions in Banach spaces). Indeed, from \cite{Triebel} we know that these Sobolev spaces $W^{m,p}$ carry an unconditional Schauder basis given by wavelets. We refer to \cite[Ch. 12]{Heil} and \cite{Meyer} for more on wavelets. In Section \ref{sect:numerics} we present a numerical example where we construct a basis instead in terms of Hermite functions for the Hilbert space $W^{1,2}$.

We finally observe that for arbitrary $z\in\R^n$, the Dirac mass $\delta_z$ is trivially an element of the topological dual of $BC(\R^n)$. In view of all of this, we conclude that the map $\langle \delta_z,\ft\rangle$ is an element of $C(\X)$. Let us show how we can utilize this to learn the solution map uniformly on small compact subsets of $\R^n$ (see also Corollary \ref{corr: to prop uniform learning of u on small compact subsets (2)} below).

\begin{proposition}\label{prop: uniform learning of u on small compact subsets (2)}

Fix $0\leq t <T$, $\mathcal K\subset\X$ compact and $R>0$. Let $\lambda$ be the 
H\"older constant provided by equation \eqref{eq: Hoelder condition}. Then there exists a constant $\Gamma=\Gamma(\mathcal K,T,R)>0$ such that
\begin{equation*}
    \abs{\langle \delta_x-\delta_y,\ft(\phi,c,f)\rangle} \leq \Gamma(\mathcal K,T,R) \abs{x-y}^\lambda
\end{equation*}
for $(\phi,c,f)\in\mathcal K$ and $\abs{x}\leq R, \abs{y}\leq R$.
\end{proposition}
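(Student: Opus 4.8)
The plan is to identify $\langle \delta_x-\delta_y,\ft(\phi,c,f)\rangle$ with $u(x,t)-u(y,t)$, where $u$ is the solution of \eqref{eq: parabolic Cauchy problem} corresponding to the data $(\phi,c,f)$, and to estimate this difference directly from the Feynman--Kac representation \eqref{eq: Feynman-Kac formula}. The three ingredients are the Sobolev--Morrey H\"older bound \eqref{eq: Hoelder condition}, the pathwise $L^2$-stability estimate \eqref{eq: stability estimate for the SDE} for the SDE \eqref{eq: SDE associated to the parabolic Cauchy problem}, and the compactness of $\mathcal K$.

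First I would extract from compactness the uniform bound $M_{\mathcal K}:=\sup_{(\phi,c,f)\in\mathcal K}\norm{(\phi,c,f)}_{\X}<\infty$ (a compact subset of a normed space is bounded), and then use \eqref{eq: Hoelder condition} together with the embedding $W^{m,p}\hookrightarrow C^{0,\lambda}\subset BC(\R^n)$ to conclude that, for every triple in $\mathcal K$, the sup-norms $\norm{\phi}_\infty,\norm{c}_\infty,\norm{f}_\infty$ and the H\"older seminorms of $\phi,c,f$ are all dominated by $C_{Sob}M_{\mathcal K}$. In particular, the exponential weights in \eqref{eq: Feynman-Kac formula} are bounded, uniformly in $\omega,x,s$ and in $(\phi,c,f)\in\mathcal K$, by $\exp(TC_{Sob}M_{\mathcal K})$.

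Next I would split each of the two expectations in \eqref{eq: Feynman-Kac formula} by the triangle inequality. Setting $A_x:=\int_t^Tc(X_{x,t}(s))\,ds$, the terminal term reads
\[
\EE\bigl[(\phi(X_{x,t}(T))-\phi(X_{y,t}(T)))e^{A_x}\bigr]+\EE\bigl[\phi(X_{y,t}(T))(e^{A_x}-e^{A_y})\bigr],
\]
and for the first summand I apply \eqref{eq: Hoelder condition} to $\phi$, while for the second I use the elementary bound $\abs{e^a-e^b}\le e^{\max(\abs a,\abs b)}\abs{a-b}$ together with $\abs{A_x-A_y}\le C_{Sob}M_{\mathcal K}\,T\,\sup_{t\le s\le T}\abs{X_{x,t}(s)-X_{y,t}(s)}^\lambda$, which itself follows from \eqref{eq: Hoelder condition} applied to $c$. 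The source term is treated identically, with the inner weights $\exp\{\int_t^sc(X_{x,t}(r))\,dr\}$ in place of $e^{A_x}$. In each case the stochastic part is controlled by $\EE\bigl[\sup_{t\le s\le T}\abs{X_{x,t}(s)-X_{y,t}(s)}^\lambda\bigr]$, and since $0<\lambda\le 1$, Jensen's inequality (concavity of $w\mapsto w^{\lambda/2}$) combined with \eqref{eq: stability estimate for the SDE} and $\abs x,\abs y\le R$ bounds this by $(C_{R,T})^{\lambda/2}\abs{x-y}^\lambda$. Summing the finitely many resulting contributions — each a product of a universal constant, a power of $M_{\mathcal K}$, a power of $T$, the factor $\exp(TC_{Sob}M_{\mathcal K})$, and $(C_{R,T})^{\lambda/2}$ — yields the desired $\Gamma=\Gamma(\mathcal K,T,R)$.

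The only point requiring genuine care is the bookkeeping: checking that every constant appearing depends solely on $\mathcal K$ (through $M_{\mathcal K}$ and the embedding constant $C_{Sob}$), on $T$, and on $R$ (through $C_{R,T}$), and never on the individual parameter triple or on the points $x,y$. This is precisely where compactness of $\mathcal K$ (equivalently, here, its boundedness in $\X$) enters; the remainder is a routine triangle-inequality estimate of the Feynman--Kac formula.
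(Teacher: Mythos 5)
Your proposal is correct and follows essentially the same route as the paper's proof: split the Feynman--Kac representation by the triangle inequality, control the $\phi$-, $c$- and $f$-differences via the Sobolev--Morrey H\"older bound \eqref{eq: Hoelder condition}, handle the exponential differences by a mean-value-type bound, reduce the $\lambda$-moment of the SDE increment to the second moment (your Jensen step plays the role of the paper's H\"older inequality with exponent $2/\lambda$), and invoke \eqref{eq: stability estimate for the SDE} together with boundedness of the compact set $\mathcal K$ to obtain a uniform constant. Your closing observation that only boundedness of $\mathcal K$ is actually used matches a remark the authors make immediately after the proposition.
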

\begin{proof}
In the following computations we are going to use repeatedly the H\"older condition in \eqref{eq: Hoelder condition}
as well as
$$
\norm{v}_\infty \leq C_{Sob}\norm{v}_{W^{m,p}}.
$$
We define for convenience the map $I$,
\begin{equation*}
    I : W^{m,p}\to \R, \quad v \mapsto
      \exp\{(T-t)\norm{v}_{\infty}\}.
\end{equation*}
We observe that it is a continuous map on $W^{m,p}$, because of the continuity of the embedding and by composition of continuous maps, namely
$$
W^{m,p}\ni v \mapsto v \in C^{0,\lambda} \mapsto \norm{v}_\infty \mapsto I(v).
$$
Let $x,y\in\R^n$, and set $u(\cdot,t)=\ft(\phi,c,f)$. Then, using Feynman-Kac formula \eqref{eq: Feynman-Kac formula}, and several times the mean value theorem for the terms involving the exponential function, we obtain 
\begin{equation*}
    \begin{split}
       &\abs{u(x,t)-u(y,t)} \leq \EE\abs{\phi(X_{x,t}(T))-\phi(X_{y,t}(T))}\exp\left[\int_t^T c(X_{x,t}(s))\,ds\right]\\
       &\quad\quad\quad + \EE\abs{\phi(X_{y,t}(T))}\abs{\exp\left[\int_t^T c(X_{x,t}(s))\,ds\right]-\exp\left[\int_t^T c(X_{y,t}(s))\,ds\right]}\\
       &\quad\quad\quad + \EE\int_t^T\abs{f(X_{x,t}(s))-f(X_{y,t}(s))}\exp\left[\int_t^s c(X_{x,t}(r))\,dr\right]ds\\
       &\quad\quad\quad + \EE\int_t^T\abs{f(X_{y,t}(s))}\abs{\exp\left[\int_t^s c(X_{x,t}(r))\,dr\right]-\exp\left[\int_t^s c(X_{y,t}(r))\,dr\right]}ds\\
       &\quad\quad \leq C_{Sob} \EE \norm{\phi}_{W^{m,p}} I(c)\abs{X_{x,t}(T)-X_{y,t}(T)}^\lambda\\
       &\quad\quad\quad +\EE \norm{\phi}_\infty I(c)\abs{\int_t^T[c(X_{x,t}(s))-c(X_{y,t}(s))]ds}\\
       &\quad\quad\quad +C_{Sob}\EE\int_t^T \norm{f}_{W^{m,p}}I(c)\abs{X_{x,t}(s)-X_{y,t}(s)}^\lambda ds\\
       &\quad\quad\quad +\EE\int_t^T \norm{f}_\infty  I(c)\abs{\int_t^s[c(X_{x,t}(r))-c(X_{y,t}(r))]dr}ds.
    \end{split}
\end{equation*}
Thus, by applying H\"older's inequality repeatedly ($2/\lambda \ge 1$),
\begin{equation*}
    \begin{split}
        &\abs{u(x,t)-u(y,t)} \leq
    C_{Sob} \norm{\phi}_{W^{m,p}} I(c)\left[\EE\abs{X_{x,t}(T)-X_{y,t}(T)}^2\right]^{\lambda/2}\\
       &\quad\quad\quad + \norm{\phi}_{\infty}I(c) C_{Sob}\norm{c}_{W^{m,p}} \EE\int_t^T\abs{X_{x,t}(s)-X_{y,t}(s)}^\lambda ds\\
       &\quad\quad\quad + C_{Sob}\norm{f}_{W^{m,p}} I(c)\EE\int_t^T \abs{X_{x,t}(s)-X_{y,t}(s)}^\lambda ds\\
       &\quad\quad\quad + C_{Sob}\norm{f}_\infty I(c) \norm{c}_{W^{m,p}}
       \EE\int_t^T\int_t^s  \abs{X_{x,t}(r)-X_{y,t}(r)}^\lambda drds\\
       &\quad\quad \leq C_{Sob} \norm{\phi}_{W^{m,p}} I(c)\left[\EE\abs{X_{x,t}(T)-X_{y,t}(T)}^2\right]^{\lambda/2}\\
       &\quad\quad\quad + \norm{\phi}_{W^{m,p}}I(c) C^2_{Sob}\norm{c}_{W^{m,p}} \left[\EE\int_t^T\abs{X_{x,t}(s)-X_{y,t}(s)}^2 ds\right]^{\lambda/2}(T-t)^{1-\lambda/2}\\
       &\quad\quad\quad + C_{Sob}\norm{f}_{W^{m,p}} I(c)\left[\EE\int_t^T\abs{X_{x,t}(s)-X_{y,t}(s)}^2 ds\right]^{\lambda/2}(T-t)^{1-\lambda/2}\\
       &\quad\quad\quad + C^2_{Sob}\norm{f}_{W^{m,p}} I(c) \norm{c}_{W^{m,p}}
       \left[\EE\int_t^T\int_t^s  \abs{X_{x,t}(r)-X_{y,t}(r)}^2 drds\right]^{\lambda/2}\left[\frac{(T-t)^2}{2}\right]^{1-\lambda/2}.
    \end{split}
\end{equation*}

Let $x,y:\abs{x}\leq R,\abs{y}\leq R$ for some fixed $R>0$. Using \eqref{eq: stability estimate for the SDE} we infer
\begin{equation*}
    \begin{split}
        \abs{u(x,t)-u(y,t)} &\leq  C_{Sob} \norm{\phi}_{W^{m,p}}I(c) C_{R,T}^{\lambda/2}\abs{x-y}^\lambda \\
        &\quad + C^2_{Sob} \norm{\phi}_{W^{m,p}}
        I(c) \norm{c}_{W^{m,p}} C_{R,T}^{\lambda/2}(T-t)\abs{x-y}^\lambda \\
        & \quad+  C_{Sob} \norm{f}_{W^{m,p}}I(c) C_{R,T}^{\lambda/2} (T-t)\abs{x-y}^\lambda \\
        &\quad + C^2_{Sob} \norm{f}_{W^{m,p}} I(c) \norm{c}_{W^{m,p}}  C_{R,T}^{\lambda/2} \frac{(T-t)^2}{2}\abs{x-y}^\lambda. 
    \end{split}
\end{equation*}
Therefore, by continuity and Weierstrass theorem, we conclude that there exists a constant $\Gamma=\Gamma(\mathcal K,T,R)>0$ such that 
\begin{equation*}
    \abs{u(x,t)-u(y,t)} \leq \Gamma(\mathcal K,T,R) \abs{x-y}^\lambda,\quad (\phi,c,f)\in \mathcal K, \abs{x}\leq R, \abs{y}\leq R, 
\end{equation*}
i.e. 
\begin{equation*}
    \abs{\ft(\phi,c,f)(x)-\ft(\phi,c,f)(y)} \leq \Gamma(\mathcal K,T,R) \abs{x-y}^\lambda,\quad (\phi,c,f)\in\mathcal K, \abs{x}\leq R, \abs{y}\leq R.
\end{equation*}
We have proved the proposition. 
\end{proof}

\begin{remark}
    We remark that the result in 
    Proposition \ref{prop: uniform learning of u on small compact subsets (2)} would hold for subsets $\mathcal K\subset \X$ being bounded only and not necessarily being compact. However, since the Universal Approximation Theorem \ref{thm:UAT} in any case requires working on compact sets, we have formulated the proposition accordingly.  
\end{remark}

We have the following important consequence, which in broad strokes tells us that, as long as we stay close to $x$ and willing to accept a slightly higher error, we do not need to change the approximating neural network architecture.
\begin{corollary}\label{corr: to prop uniform learning of u on small compact subsets (2)}
Fix $0\leq t <T$, $\mathcal K\subset\X$ compact and $R>0$. Let $x:\abs{x}\leq R$. Let $\varepsilon>0$ be arbitrary. Suppose to be given for some $N\in\mathbb N$ 
\begin{equation*}
    \mathcal{N}^N = \sum_{j=1}^N  \mathcal{N}_{\ell_j,A_j,\beta_j}
\end{equation*}
with $\ell_j\in\X',A_j\in\mathcal{L}(\X)$ and $\beta_j\in\X$ such that 
\begin{equation*}
\sup_{(\phi,c,f)\in \mathcal K} \abs{\mathcal{N}^N(\phi,c,f) - \langle \delta_x,\ft(\phi,c,f)\rangle} <\varepsilon.
\end{equation*}
Fix $\varepsilon'>0$ and set $r=\left(\frac{\varepsilon'}{\Gamma(\mathcal K,T,R)}\right)^{1/\lambda}$. Then for any $y\in B_r(x),\abs{y}\leq R$, it holds 
\begin{equation*}
\sup_{(\phi,c,f)\in K} \abs{(\mathcal{N}^N(\phi,c,f) - \langle \delta_y,\ft(\phi,c,f)\rangle} <\varepsilon + \varepsilon'.
\end{equation*}
\end{corollary}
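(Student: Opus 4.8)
The plan is to reduce everything to the triangle inequality, splitting the error at $y$ into the known error at $x$ plus the spatial oscillation of $u(\cdot,t)$ between $x$ and $y$, which is exactly what Proposition \ref{prop: uniform learning of u on small compact subsets (2)} controls. First I would fix $(\phi,c,f)\in\mathcal K$ and $y\in B_r(x)$ with $\abs{y}\le R$, and write
\[
\abs{\mathcal N^N(\phi,c,f)-\langle\delta_y,\ft(\phi,c,f)\rangle}
\le \abs{\mathcal N^N(\phi,c,f)-\langle\delta_x,\ft(\phi,c,f)\rangle}
+\abs{\langle\delta_x-\delta_y,\ft(\phi,c,f)\rangle}.
\]
The first term on the right is strictly less than $\varepsilon$ by hypothesis, uniformly in $(\phi,c,f)\in\mathcal K$. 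For the second term I would invoke Proposition \ref{prop: uniform learning of u on small compact subsets (2)}: since $\abs{x}\le R$, $\abs{y}\le R$ and $(\phi,c,f)\in\mathcal K$, it gives $\abs{\langle\delta_x-\delta_y,\ft(\phi,c,f)\rangle}\le\Gamma(\mathcal K,T,R)\,\abs{x-y}^\lambda$.

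Next I would use the choice of radius. Since $y\in B_r(x)$ we have $\abs{x-y}<r=\bigl(\varepsilon'/\Gamma(\mathcal K,T,R)\bigr)^{1/\lambda}$, hence $\abs{x-y}^\lambda<\varepsilon'/\Gamma(\mathcal K,T,R)$, and therefore $\Gamma(\mathcal K,T,R)\,\abs{x-y}^\lambda<\varepsilon'$. Combining the two bounds yields $\abs{\mathcal N^N(\phi,c,f)-\langle\delta_y,\ft(\phi,c,f)\rangle}<\varepsilon+\varepsilon'$. Since the bound on the first term was uniform over $\mathcal K$ and the bound on the second term does not depend on $(\phi,c,f)$ beyond membership in $\mathcal K$, I would then take the supremum over $(\phi,c,f)\in\mathcal K$ to obtain the claimed inequality.

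There is essentially no obstacle here: the corollary is a direct composition of the hypothesis with the already-established Hölder estimate, and the only point requiring a word of care is that $\Gamma(\mathcal K,T,R)>0$ (guaranteed by the proposition) so that $r$ is well-defined and positive, and that the estimate of Proposition \ref{prop: uniform learning of u on small compact subsets (2)} is uniform in $(\phi,c,f)\in\mathcal K$, which it is. I would close by remarking that the set $B_r(x)\cap\{\abs{y}\le R\}$ is nonempty (it contains $x$), so the statement is not vacuous.
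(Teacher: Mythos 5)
Your argument is correct and coincides with the paper's proof: both apply Proposition \ref{prop: uniform learning of u on small compact subsets (2)} together with the choice of $r$ to bound $\abs{\langle \delta_x-\delta_y,\ft(\phi,c,f)\rangle}$ by $\varepsilon'$ uniformly over $\mathcal K$, and then conclude by the triangle inequality. Your write-up merely spells out the intermediate steps in slightly more detail.
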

\begin{proof}
From the previous proposition, we indeed have for any $y\in B_r(x),\abs{y}\leq R$
\begin{equation*}
     \abs{\langle \delta_x-\delta_y,\ft(\phi,c,f)\rangle} < \varepsilon'
\end{equation*}
for any $(\phi,c,f)\in\mathcal K$, and therefore
\begin{equation*}
    \sup_{(\phi,c,f)\in \mathcal K}\abs{\langle \delta_x,\ft(\phi,c,f)\rangle - \langle \delta_y,\ft(\phi,c,f)\rangle} < \varepsilon'.
\end{equation*}
By the triangle inequality we get the claim.
\end{proof}

Needless to say, everything said until here still holds for possibly different solution operators $F^t$ where some of the ``variables''defining the parabolic Cauchy problem are fixed, i.e., given exogenously. However, by relaxing the properties of these fixed variables, we can allow for more flexible specifications but still preserve the continuity of the solution operator. We investigate this next in the case where $f=0$ and the final datum $\phi$ is not necessarily in $W^{m,p}$ but is continuous with some  polynomial growth (i.e., still satisfying the standard assumptions ensuring well-posedness of \eqref{eq: parabolic Cauchy problem}). 
Namely, we consider solutions of this kind:
\begin{equation*}
        u(x,t) = \EE\left[\phi(X_{x,t}(T))\exp\left(\int_t^T c(X_{x,t}(s))\,ds\right)\right],\quad (x,t)\in \R^n\times [0,T]
\end{equation*}
with $c\in W^{m,p}$ and $\phi$ continuous in $\R^n$ such that $\abs{\phi(x)}\leq \kappa(1+\abs{x})^{\gamma}$, where $\kappa,\gamma$ are positive constants. From the general theory in Subsection \ref{subsect:cauchy} we know that $\abs{u(x,t)}\leq \kappa (1+\abs{x})^{\gamma}$ for some $\kappa>0$ (possibly different to the above), and thus in general the solution will be unbounded. To overcome this issue, we simply restrict ourselves to a fixed compact subset $K\subset \R^n$, and we will learn the solution here. More precisely, for $0\leq t < T$, $K\subset \R^n$ compact and $\phi$ as above we define the following solution operator

\begin{equation}\label{eq: solution operator 2 (2)}
    \fth: W^{m,p}\to C(K),\quad c\longmapsto u(\cdot,t)\Big|_K
\end{equation}
namely the solution of \eqref{eq: parabolic Cauchy problem} as a function of $c$ restricted to $K$ with $f=0$, final datum $\phi$ given. Also in this setting we obtain:

\begin{proposition}\label{prop: continuity of the solution operator 2 (2)}
Let $\phi$ be continuous in $\R^n$ and such that $\abs{\phi(x)}\leq \kappa(1+\abs{x})^{\gamma}$, where $\kappa,\gamma$ are positive constants. Assume $c\in W^{m,p}$, $0\leq t <T$ and let $K\subset\R^n$ be compact. Then the solution operator $\fth$ of the parabolic Cauchy problem
\begin{equation*}
    \begin{cases}
    Lu + \partial_tu  =0, \quad \text{in } \R^n\times [0,T)\\
    u(x,T) = \phi(x), \quad \text{in } \R^n,
    \end{cases}
\end{equation*}
is continuous from $W^{m,p}$ into $C(K)$.

If $\phi$ is assumed additionally to be H\"older continuous with exponent $0<\tilde{\lambda}\leq 1$, the following holds: given a compact subset $\mathcal K\subset W^{m,p}$, there is $\Gamma=\Gamma(\phi,\tilde{\lambda},T,a_{ij},b_i,K,\mathcal K)>0$ constant such that 
\begin{equation*}
    \abs{
    \langle \delta_x - \delta_y, \fth(c)\rangle} \leq \Gamma \abs{x-y}^{\tilde{\lambda}},\quad c\in \mathcal K, \,x,y\in K.
\end{equation*}
\end{proposition}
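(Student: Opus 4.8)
The plan is to mirror the structure of the proofs of Propositions \ref{prop: continuity of the solution operator (2)} and \ref{prop: uniform learning of u on small compact subsets (2)}, adapting the arguments to the simpler situation $f=0$ and to a non-Sobolev final datum $\phi$ that only has polynomial growth. For the first (continuity) assertion, I would take a sequence $c_k \stackrel{W^{m,p}}{\longrightarrow} c$ and let $u_k, u$ be the corresponding solutions with the common final datum $\phi$. Since $W^{m,p}\hookrightarrow C^{0,\lambda}$, we have $c_k\to c$ uniformly on $\R^n$, hence $c_k(X_{x,t}(s))\to c(X_{x,t}(s))$ uniformly in $(\omega,x,s)$, so $\int_t^T c_k(X_{x,t}(s))\,ds \to \int_t^T c(X_{x,t}(s))\,ds$ uniformly in $(\omega,x)$, and by the mean value theorem (using that the integrals are uniformly bounded, since $\|c_k\|_\infty$ is bounded along a convergent sequence) the exponentials converge uniformly as well. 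The one new ingredient compared to Proposition \ref{prop: continuity of the solution operator (2)} is that the factor $\phi(X_{x,t}(T))$ is no longer bounded; but for $x$ ranging over the \emph{fixed} compact $K$, the polynomial growth of $\phi$ together with the standard moment bounds for the SDE \eqref{eq: SDE associated to the parabolic Cauchy problem} (e.g. $\sup_{x\in K}\EE[\sup_{t\le s\le T}(1+|X_{x,t}(s)|)^{2\gamma}]<\infty$, which follows from the classical theory in \cite{friedman1975}) gives a uniform $L^1(\Omega)$ bound on $\phi(X_{x,t}(T))$ over $x\in K$. Dominated convergence then yields $\sup_{x\in K}|u_k(x,t)-u(x,t)|\to 0$, which is the claimed continuity into $C(K)$.

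For the second (Hölder) assertion, I would repeat the computation in the proof of Proposition \ref{prop: uniform learning of u on small compact subsets (2)} with only the first two of the four terms present (no $f$-terms), now using the hypothesis that $\phi$ is $\tilde\lambda$-Hölder. Writing $u(\cdot,t)=\fth(c)$ and using Feynman--Kac, for $x,y\in K$,
\begin{equation*}
\begin{split}
|u(x,t)-u(y,t)| &\le \EE\Big[|\phi(X_{x,t}(T))-\phi(X_{y,t}(T))|\exp\Big(\int_t^T c(X_{x,t}(s))\,ds\Big)\Big]\\
&\quad + \EE\Big[|\phi(X_{y,t}(T))|\,\Big|\exp\Big(\int_t^T c(X_{x,t}(s))\,ds\Big)-\exp\Big(\int_t^T c(X_{y,t}(s))\,ds\Big)\Big|\Big].
\end{split}
\end{equation*}
The first term is bounded, using the $\tilde\lambda$-Hölder continuity of $\phi$, by $[\phi]_{\tilde\lambda}\, e^{(T-t)\|c\|_\infty}\,\EE|X_{x,t}(T)-X_{y,t}(T)|^{\tilde\lambda}$, and then by Jensen/Hölder and the stability estimate \eqref{eq: stability estimate for the SDE} (with $R$ chosen so that $K\subset B_R(0)$) by a constant times $\|c\|_{W^{m,p}}$-dependent factor times $|x-y|^{\tilde\lambda}$. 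For the second term I would apply the mean value theorem to the exponential, bound $|\phi(X_{y,t}(T))|$ by $\kappa(1+|X_{y,t}(T)|)^\gamma$, use Hölder's inequality to split off the moment bound $\sup_{y\in K}\EE[(1+|X_{y,t}(T)|)^{2\gamma}]$, and control $|\int_t^T[c(X_{x,t}(s))-c(X_{y,t}(s))]\,ds|$ by $C_{Sob}\|c\|_{W^{m,p}}\int_t^T|X_{x,t}(s)-X_{y,t}(s)|^\lambda\,ds$, again invoking \eqref{eq: stability estimate for the SDE}. Since $c$ ranges over the compact set $\mathcal{K}\subset W^{m,p}$, the quantities $\|c\|_{W^{m,p}}$ and $e^{(T-t)\|c\|_\infty}$ are bounded over $\mathcal K$ by continuity and the Weierstrass theorem, so collecting the estimates produces the constant $\Gamma=\Gamma(\phi,\tilde\lambda,T,a_{ij},b_i,K,\mathcal K)$ with $|\langle\delta_x-\delta_y,\fth(c)\rangle|\le \Gamma|x-y|^{\tilde\lambda}$.

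The main obstacle, relative to the earlier propositions, is the loss of boundedness of $\phi$: every step that previously used $\|\phi\|_\infty$ must now be routed through the polynomial growth bound $|\phi(x)|\le\kappa(1+|x|)^\gamma$ together with uniform-in-$x$ (over the compact $K$) moment estimates for the diffusion $X_{x,t}$. I would isolate this as a preliminary lemma — boundedness of $\sup_{x\in K}\EE[\sup_{t\le s\le T}(1+|X_{x,t}(s)|)^{q}]$ for any $q>0$, which is standard for SDEs with Lipschitz, linearly-growing coefficients as in Assumption \ref{basic-pde-assumption} — and then the rest of the argument is a routine adaptation of the two earlier proofs, with Hölder's inequality used to pair the moment bound against the $L^{2/\tilde\lambda}$- or $L^{2/\lambda}$-norms of the increments $|X_{x,t}-X_{y,t}|$ appearing in \eqref{eq: stability estimate for the SDE}.
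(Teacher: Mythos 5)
Your proposal is correct and follows essentially the same route as the paper's proof: for continuity, uniform convergence of the exponential factors paired with a uniform-in-$x\in K$ moment bound on $\phi(X_{x,t}(T))$ coming from the polynomial growth of $\phi$ and the standard SDE moment estimates from \cite{friedman1975}; for the H\"older estimate, the same two-term Feynman--Kac decomposition, the mean value theorem for the exponential, H\"older/Jensen to pair the polynomial-growth moment bound against the increments from \eqref{eq: stability estimate for the SDE}, and Weierstrass over the compact $\mathcal K$. The only cosmetic difference is the choice of conjugate exponents in H\"older's inequality (you use $2\gamma$-moments where the paper uses $\EE|\phi(X_{y,t}(T))|^{2/(2-\tilde{\lambda})}$), which does not affect the argument.
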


\begin{proof}

Let us show continuity first. Given $c_k\stackrel{W^{m,p}}{\longrightarrow}c$, we know from before that $$ \exp\left\{\int_t^T c_k(X_{x,t}(s))\,ds\right\}\to \exp\left\{\int_t^T  c(X_{x,t}(s))\,ds\right\}$$ uniformly on $\Omega\times K$. Furthermore, from standard SDEs theory (see for instance \cite[Thm 2.3 page 107]{friedman1975}), we know that for any $h\in\N$
\begin{equation*}
    \EE\abs{X_{x,t}(s)}^h \leq (2 + \abs{x}^h)e^{Cs},\quad t\leq s \leq T, \,x\in\R^n
\end{equation*}
where $C=C(h,a_{ij},b_i,T)$. Thus, in view of the bound satisfied by $\phi$, we easily get
\begin{equation*}
    \abs{\phi(X_{x,t}(T))} \lesssim_\phi (1 + \abs{X_{x,t}(T)}^h),\quad x\in K 
\end{equation*}
for some $h=h(\phi)\in\N$, and hence
\begin{equation*}
    \EE \abs{\phi(X_{x,t}(T))} \leq C(\phi,T,a_{ij},b_i,K)
\end{equation*}
uniformly in $x\in K$. From this we infer
\begin{equation*}
    \begin{split}
        &\abs{u_k(x,t) -u(x,t)}  \leq  \EE \abs{\phi(X_{x,t}(T))} \abs{\exp\left[\int_t^T c_k(X_{x,t}(s))\,ds\right]-\exp\left[\int_t^T c(X_{x,t}(s))\,ds\right]}\\
        & \quad\quad \leq \EE \abs{\phi(X_{x,t}(T))} \sup_{(\omega,x)\in\Omega\times K}\abs{\exp\left[\int_t^T c_k(X_{x,t}(s))\,ds\right]-\exp\left[\int_t^T c(X_{x,t}(s))\,ds\right]}\\
        & \quad\quad \leq C(\phi,T,a_{ij},b_i,K) \sup_{(\omega,x)\in\Omega\times K}\abs{\exp\left[\int_t^T c_m(X_{x,t}(s))\,ds\right]-\exp\left[\int_t^T c(X_{x,t}(s))\,ds\right]}
    \end{split}
\end{equation*}
uniformly in $x\in K$. Therefore, $\sup_{x\in K}\abs{u_k(x,t) -u(x,t)}\to 0$ as $k\to\infty$, namely $\fth$ is continuous on $W^{m,p}$ into $C(K)$.

Let us now assume additionally that $\phi$ is H\"older continuous for some exponent $0<\tilde{\lambda}\leq 1$. Arguing as above in the proof of Proposition \ref{prop: uniform learning of u on small compact subsets (2)}, for $x,y\in K$ we now have
\begin{equation*}
    \begin{split}
       &\abs{u(x,t)-u(y,t)} \leq \EE\abs{\phi(X_{x,t}(T))-\phi(X_{y,t}(T))}\exp\left[\int_t^T c(X_{x,t}(s))\,ds\right]\\
       &\quad\quad + \EE\abs{\phi(X_{y,t}(T))}\abs{\exp\left[\int_t^T c(X_{x,t}(s))\,ds\right]-\exp\left[\int_t^T c(X_{y,t}(s))\,ds\right]}\\
       &\quad\quad \leq 
       C_\phi
       I(c)\EE\abs{X_{x,t}(T)-X_{y,t}(T)}^{\tilde{\lambda}}\\
       &\quad\quad\quad + I(c) \EE \abs{\phi(X_{y,t}(T))}
       \abs{\int_t^T[c(X_{x,t}(s))-c(X_{y,t}(s))]ds}\\
       &\quad\quad \leq C_\phi I(c)\left[\EE\abs{X_{x,t}(T)-X_{y,t}(T)}^2\right]^{\tilde{\lambda}/2}\\
       &\quad\quad\quad + I(c) C_{Sob}\norm{c}_{W^{m,p}}
       \EE \abs{\phi(X_{y,t}(T))}
       \int_t^T\abs{X_{x,t}(s)-X_{y,t}(s)}^{\tilde{\lambda}} ds.
    \end{split}
\end{equation*}
By H\"older's and Jensen's inequalities we infer
\begin{equation*}
    \begin{split}
         \EE \abs{\phi(X_{y,t}(T))} &
       \int_t^T\abs{X_{x,t}(s)-X_{y,t}(s)}^{\tilde{ \lambda}} ds  \leq
       \left[
        \EE \abs{\phi(X_{y,t}(T))}^{2/(2-\tilde{\lambda})}
       \right]^{1-\tilde{\lambda}/2}\;\times \\
       & \quad \times  \left[\EE \left[\int_t^T\abs{X_{x,t}(s)-X_{y,t}(s)}^{\tilde{\lambda}} ds \right]^{2/\tilde{ \lambda}}\right]^{\tilde{\lambda}/2}\\
       &\leq  \left[
        \EE \abs{\phi(X_{y,t}(T))}^{2/(2-\tilde{\lambda})}
       \right]^{1-\tilde{\lambda}/2} \; \times \\
       & \quad \times 
       (T-t)^{1-\tilde{\lambda}/2}  \left[\EE \int_t^T\abs{X_{x,t}(s)-X_{y,t}(s)}^2 ds \right]^{\tilde{\lambda}/2}.
    \end{split}
\end{equation*}

From above, we deduce
\begin{equation*}
    \abs{\phi(X_{y,t}(T))}^{2/(2-\tilde{\lambda})} \lesssim_{\phi,\tilde{\lambda}} (1 + \abs{X_{y,t}(T)}^{2h/(2-\tilde{\lambda})}),\quad y\in K 
\end{equation*}
for some $h=h(\phi)\in\N$, and hence, with a different constant,
\begin{equation*}
    \EE \abs{\phi(X_{y,t}(T))}^{2/(2-\tilde{\lambda})} \leq C(\phi,\tilde{\lambda}, T,a_{ij},b_i,K)
\end{equation*}
uniformly in $y\in K$. Similarly to above, we then obtain
\begin{equation*}
    \begin{split}
        \abs{u(x,t)-u(y,t)} &\leq  C_\phi I(c) C_{K,T}^{\tilde{\lambda}/2}\abs{x-y}^{\tilde{\lambda}} +\\ &\quad + C_{Sob}I(c) \norm{c}_{W^{m,p}} C(\phi,\tilde{\lambda}, T,a_{ij},b_i,K) C_{H,T}^{\tilde{\lambda}/2}(T-t)\abs{x-y}^{\tilde{\lambda}},
    \end{split}
\end{equation*}
for $x,y\in K$.
Let $\mathcal K\subset W^{m,p}$ be a fixed compact subset. By continuity and the Weierstrass Theorem, we conclude that there exists a constant $\Gamma=\Gamma(\phi,\tilde{\lambda},T,a_{ij},b_i,K,\mathcal{K})>0$ such that 
\begin{equation*}
    \abs{u(x,t)-u(y,t)} \leq \Gamma \abs{x-y}^{\tilde{\lambda}},\quad c\in\mathcal K, \,x,y\in K, 
\end{equation*}
i.e., 
\begin{equation*}
    \abs{\fth(c)(x)-\fth(c)(y)} \leq \Gamma \abs{x-y}^{\tilde{\lambda}},\quad c\in\mathcal K, \,x,y\in K,
\end{equation*}
and the claim follows.
\end{proof}

As an immediate consequence, we effortlessly obtain the analogous of Corollary \ref{corr: to prop uniform learning of u on small compact subsets (2)}:
\begin{corollary}\label{corr: to prop continuity of the solution operator 2 (2)}
Assume the setting of Proposition \ref{prop: continuity of the solution operator 2 (2)}. Let $x\in K$ and $\varepsilon>0$ be arbitrary. Suppose for $N\in\mathbb N$ to be given 
\begin{equation*}
    \mathcal{N}^N = \sum_{j=1}^N \mathcal{N}_{\ell_j,A_j,\beta_j}
\end{equation*}
with $\ell_j\in (W^{m,p})'$, $A_j\in\mathcal{L}(W^{m,p})$ and $\beta_j\in W^{m,p}$  
such that 
\begin{equation*}
\sup_{c\in\mathcal K}\abs{ \mathcal{N}^N(c) - \langle \delta_x,\fth(c)\rangle} <\varepsilon.
\end{equation*}
Fix $\varepsilon'>0$ and set $r=\left(\frac{\varepsilon'}{\Gamma}\right)^{1/\tilde{\lambda}}$. Then for any $y\in K\cap B_r(x)$, it holds 
\begin{equation*}
\sup_{c\in \mathcal K}\abs{(\mathcal{N}^N(c) - \langle \delta_y,\fth(c)\rangle} <\varepsilon + \varepsilon'.
\end{equation*}
\end{corollary}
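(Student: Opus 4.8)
The plan is to mimic exactly the proof of Corollary \ref{corr: to prop uniform learning of u on small compact subsets (2)}, since the second part of Proposition \ref{prop: continuity of the solution operator 2 (2)} supplies precisely the Hölder-type bound we need, and the only difference is that the underlying Banach space is now $W^{m,p}$ rather than the product $\X$, and the relevant compact is $K\subset\R^n$ rather than the ball $B_R(0)$.

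First I would invoke Proposition \ref{prop: continuity of the solution operator 2 (2)} (its second assertion, under the standing assumption that $\phi$ is Hölder continuous with exponent $\tilde\lambda$) to get the constant $\Gamma=\Gamma(\phi,\tilde\lambda,T,a_{ij},b_i,K,\mathcal K)$ such that
\[
\abs{\langle \delta_x-\delta_y,\fth(c)\rangle}\le \Gamma\abs{x-y}^{\tilde\lambda},\qquad c\in\mathcal K,\ x,y\in K.
\]
Next, for $y\in K\cap B_r(x)$ with $r=(\varepsilon'/\Gamma)^{1/\tilde\lambda}$, I would note $\abs{x-y}^{\tilde\lambda}<r^{\tilde\lambda}=\varepsilon'/\Gamma$, hence $\abs{\langle \delta_x-\delta_y,\fth(c)\rangle}<\varepsilon'$ uniformly over $c\in\mathcal K$, i.e.\ $\sup_{c\in\mathcal K}\abs{\langle \delta_x,\fth(c)\rangle-\langle \delta_y,\fth(c)\rangle}<\varepsilon'$.

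Then I would close the argument with the triangle inequality: for any $c\in\mathcal K$,
\[
\abs{\mathcal N^N(c)-\langle\delta_y,\fth(c)\rangle}\le \abs{\mathcal N^N(c)-\langle\delta_x,\fth(c)\rangle}+\abs{\langle\delta_x-\delta_y,\fth(c)\rangle}<\varepsilon+\varepsilon',
\]
and taking the supremum over $c\in\mathcal K$ yields the claim. Note that one should be slightly careful that the compact $\mathcal K$ fixing $\Gamma$ is the same one appearing in the hypothesis on $\mathcal N^N$; this is guaranteed by the phrasing ``assume the setting of Proposition \ref{prop: continuity of the solution operator 2 (2)}.''

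There is really no main obstacle here — the work has all been done in Proposition \ref{prop: continuity of the solution operator 2 (2)}; the corollary is a two-line bookkeeping exercise. If anything, the only point requiring a word of care is the open-versus-closed ball and the strict inequality: since $y\in B_r(x)$ is the open ball, $\abs{x-y}<r$ gives the strict inequality $<\varepsilon'$, which then combines cleanly with the strict $<\varepsilon$ in the hypothesis to give $<\varepsilon+\varepsilon'$ rather than $\le$.
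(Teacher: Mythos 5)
Your proposal is correct and follows essentially the same route as the paper, which obtains this corollary verbatim as the analogue of Corollary \ref{corr: to prop uniform learning of u on small compact subsets (2)}: apply the H\"older bound from the second part of Proposition \ref{prop: continuity of the solution operator 2 (2)} to get $\abs{\langle \delta_x-\delta_y,\fth(c)\rangle}<\varepsilon'$ for $y\in K\cap B_r(x)$, then conclude by the triangle inequality. Your remarks on the open ball and the preservation of strict inequalities are fine (and one can note that the supremum over the compact $\mathcal K$ is attained by continuity, so strictness is indeed preserved).
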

After these theoretical considerations on continuity, verifying the use of the Universal Approximation Theorem for operator-learning, we proceed in the next section with a numerical case study.
\section{A numerical case study}
\label{sect:numerics}

We demonstrate our proposed methodology by considering a particular case of the Cauchy problem set on $\mathbb R$ and train a neural network to learn the operator mapping the function $c$ into the $u$ solution evaluated in a location. In our proof-of-concept study, we benchmark with respect to the DeepONet approach.

For our purposes, we need to have a set of orthonormal basis functions in $W^{1,2}$. These provide us with structural information that we exploit in the training. Here we propose to construct such a basis from the Hermite functions, which is an orthonormal basis of $L^2(\mathbb R)$.

\subsection{Basis functions}
Following e.g. \cite[p. 261]{schwartz1950theorie}, we define the 1-d Hermite polynomials by
\begin{equation}\label{eq: Hermite polynomials}
    H_m(x)=(-1)^m2^{1/4-m}(m!)^{-1/2}\pi^{-m/2}\exp(2\pi x^2)\frac{d^m}{dx^m}\exp(-2\pi x^2) 
\end{equation}
and the associated Hermite functions
\begin{equation}\label{eq: Hermite functions 1-d}
    \mathcal{H}_m(x)= H_m(x)\exp(-\pi x^2). 
\end{equation}
where $x\in\R, \,m\in \N_0$.
Then $(\mathcal{H}_m)_{m\in\N_0}$ is an orthonormal system in $L^2(\R)$. We first derive $\langle\mathcal{H}_m,\mathcal{H}_n\rangle_{W^{1,2}}$ which we need in order to obtain an orthonormal set of vectors in $W^{1,2}$.

\begin{proposition}
The following holds: 
\begin{enumerate}
    \item for $m,n\in\N$ 
\[
\begin{split}
    \int_{\mathbb R} \mathcal{H}'_m(x)\mathcal{H}'_n(x)\,dx
    &=\pi(2m+1)\delta_{m,n} -\pi\sqrt{m(m-1)}\,\delta_{m,n+2} \\
    &\qquad\qquad-\pi\sqrt{(m+1)(m+2)}\,\delta_{m,n-2},
\end{split}
\]
\item for $m\in\N$ and $n=0$
\[
\begin{split}
    &\int_{\mathbb R} \mathcal{H}'_m(x)\mathcal{H}'_0(x)\,dx
=-\pi\sqrt{2}\,\delta_{m,2}, 
\end{split}
\]
\item and for $m=n=0$
\[
\int_{\mathbb R} \mathcal{H}'_0(x)\mathcal{H}'_0(x)\,dx
= \pi.
\]
\end{enumerate}
\end{proposition}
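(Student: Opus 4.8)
The plan is to reduce all three identities to a single differentiation formula for the Hermite functions, together with their orthonormality in $L^2(\R)$. First I would establish the three-term recurrence for the derivatives,
\begin{equation}\label{eq: Hermite der recurrence plan}
\mathcal{H}_m'(x)=\sqrt{\pi}\,\bigl(\sqrt{m}\,\mathcal{H}_{m-1}(x)-\sqrt{m+1}\,\mathcal{H}_{m+1}(x)\bigr),\qquad m\in\N_0,
\end{equation}
with the convention $\mathcal{H}_{-1}\equiv 0$ (so that $\mathcal{H}_0'=-\sqrt{\pi}\,\mathcal{H}_1$). This can be obtained from the classical recurrences for the Hermite polynomials $H_m$ in \eqref{eq: Hermite polynomials} --- the lowering relation ($H_m'$ is a multiple of $H_{m-1}$) and the three-term relation expressing $xH_m$ through $H_{m-1}$ and $H_{m+1}$ --- after differentiating $\mathcal{H}_m=H_m\exp(-\pi x^2)$ and carefully bookkeeping the normalising constants $2^{1/4-m}(m!)^{-1/2}\pi^{-m/2}$; equivalently, one may verify \eqref{eq: Hermite der recurrence plan} directly on $\mathcal{H}_0,\mathcal{H}_1,\mathcal{H}_2$ and propagate it, or derive it from the Hermite differential equation $\mathcal{H}_m''=(4\pi^2x^2-2\pi(2m+1))\mathcal{H}_m$ together with $x\mathcal{H}_m=\tfrac{1}{2\sqrt{\pi}}(\sqrt{m}\,\mathcal{H}_{m-1}+\sqrt{m+1}\,\mathcal{H}_{m+1})$.

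Granting \eqref{eq: Hermite der recurrence plan}, part (1) is then a plain expansion: for $m,n\in\N$,
\[
\int_{\R}\mathcal{H}_m'(x)\mathcal{H}_n'(x)\,dx
=\pi\,\bigl\langle \sqrt{m}\,\mathcal{H}_{m-1}-\sqrt{m+1}\,\mathcal{H}_{m+1},\ \sqrt{n}\,\mathcal{H}_{n-1}-\sqrt{n+1}\,\mathcal{H}_{n+1}\bigr\rangle_{L^2(\R)},
\]
and one multiplies out the four pairings and uses $\langle\mathcal{H}_i,\mathcal{H}_j\rangle_{L^2}=\delta_{i,j}$. The two ``diagonal'' pairings give $\pi\sqrt{mn}\,\delta_{m-1,n-1}+\pi\sqrt{(m+1)(n+1)}\,\delta_{m+1,n+1}$, both of which force $m=n$ and sum to $\pi(m+(m+1))\delta_{m,n}=\pi(2m+1)\delta_{m,n}$. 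The cross pairing $-\pi\sqrt{m}\sqrt{n+1}\,\delta_{m-1,n+1}$ forces $m=n+2$ and yields $-\pi\sqrt{m(m-1)}\,\delta_{m,n+2}$; the cross pairing $-\pi\sqrt{m+1}\sqrt{n}\,\delta_{m+1,n-1}$ forces $m=n-2$ and yields $-\pi\sqrt{(m+1)(m+2)}\,\delta_{m,n-2}$. Part (2) is the same computation with $\mathcal{H}_0'=-\sqrt{\pi}\,\mathcal{H}_1$: one gets $-\pi\bigl(\sqrt{m}\,\delta_{m-1,1}-\sqrt{m+1}\,\delta_{m+1,1}\bigr)$, and since $m\ge 1$ the second Kronecker delta vanishes, leaving $-\pi\sqrt{2}\,\delta_{m,2}$. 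Part (3) is immediate from the same identity: $\int_\R(\mathcal{H}_0')^2\,dx=\pi\int_\R\mathcal{H}_1^2\,dx=\pi$.

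The only genuinely delicate point is pinning down the constant $\sqrt{\pi}$ and the coefficients $\sqrt{m},\sqrt{m+1}$ in \eqref{eq: Hermite der recurrence plan} under the somewhat non-standard normalisation of \eqref{eq: Hermite polynomials}--\eqref{eq: Hermite functions 1-d} (the $2\pi$ in the Gaussian weight and the $2^{1/4-m}$-type prefactors); once that identity is secured, everything else is routine bookkeeping with Kronecker deltas, the only care needed being to state (1) for $m,n\in\N$ so that no index $m-1$ or $n-1$ drops below $0$, and to treat (2) and (3) separately because there the missing $\mathcal{H}_{-1}$ term removes one of the two summands in the recurrence. If one prefers the integration-by-parts route, $\int_\R\mathcal{H}_m'\mathcal{H}_n'\,dx=-\int_\R\mathcal{H}_m''\mathcal{H}_n\,dx$, the boundary terms vanish automatically thanks to the rapid decay of the Hermite functions and their derivatives, and one is then left with evaluating $\int_\R x^2\mathcal{H}_m\mathcal{H}_n\,dx$ via the $x$-multiplication recurrence.
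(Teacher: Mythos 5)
Your proposal is correct and follows essentially the same route as the paper: both hinge on the ladder identity $\mathcal{H}_m'=\sqrt{\pi m}\,\mathcal{H}_{m-1}-\sqrt{\pi(m+1)}\,\mathcal{H}_{m+1}$ (obtained in the paper by summing the two raising/lowering relations from Schwartz) followed by expanding the four pairings against the $L^2$-orthonormality of the $\mathcal{H}_m$, with the $m=0$ cases handled via $\mathcal{H}_0'=-\sqrt{\pi}\,\mathcal{H}_1$. Your Kronecker-delta bookkeeping matches the paper's computation exactly.
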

\begin{proof}
It follows from \cite[VII, 7; 30]{schwartz1950theorie} (or by direct computation) that
$$
    -\mathcal{H}'_m(x) + 2\pi x\mathcal{H}_m(x) = 2\sqrt{\pi(m+1)}\mathcal{H}_{m+1}(x),\quad m\in \N_0
$$
and
$$
    -\mathcal{H}'_m(x) - 2\pi x\mathcal{H}_m(x) = -2\sqrt{\pi m}\mathcal{H}_{m-1}(x),\quad m\in\N.
$$
By summing up these two equations, one obtains the recursion
$$
    \mathcal{H}'_m(x) = \sqrt{\pi m}\mathcal{H}_{m-1}(x) - \sqrt{\pi(m+1)}\mathcal{H}_{m+1}(x),\quad m\in\N.
$$
Hence, after appealing to the orthogonality of $(\mathcal H_m)_{m\in\mathbb N_0}$, we get
\begin{align*}
&\int_{\mathbb R} \mathcal{H}'_m(x)\mathcal{H}'_n(x)\,dx\\ 
&\qquad=\pi\left(\sqrt{mn} + \sqrt{(m+1)(n+1)}\right)\,\delta_{m,n} - \pi \sqrt{m(n+1)} \,\delta_{m,n+2} - \pi\sqrt{(m+1)n}\,\delta_{m,n-2}
\end{align*}
which proves (1).
For $m=0$ one obtains directly from the definition of $\mathcal{H}_0$ that
$$
    \mathcal{H}'_0(x) = -2\pi x \mathcal{H}_0(x).
$$
From the definition of $H_0$ and $\mathcal{H}_0$ it follows that $H_0(x) = 2^{1/4}$ and $H_1(x)=2^{5/4}\sqrt{\pi}x$.  
Now we can re-write $\mathcal{H}'_0(x)=-2\pi x \mathcal{H}_0(x)$ as
\[
\mathcal{H}'_0(x)=-2\pi x 2^{1/4}\exp(-\pi x^2) = -\sqrt{\pi}\mathcal{H}_1(x)
\]
because $\mathcal{H}_1(x)=2^{1/4}2\sqrt{\pi}x\exp(-\pi x^2)$. With this last observation the case (2) with $m=1$ and case (3) follows.
\end{proof}

To this end, we write the conclusions of the last proposition in a more concise form, resulting in the following ``multiplication table'': for $m,n\in\N_0$ it holds:
\begin{itemize}
    \item $\langle\mathcal{H}_m,\mathcal{H}_n\rangle_{W^{1,2}} = 1+\pi(2m+1) $ for
    $m=n$;
    \item $\langle\mathcal{H}_m,\mathcal{H}_n\rangle_{W^{1,2}}= -\pi \sqrt{\ell(\ell-1)}$ for $\abs{m-n}=2$ with $\ell=\max\{m,n\}$; 
    \item $\langle\mathcal{H}_m,\mathcal{H}_n\rangle_{W^{1,2}} = 0$ otherwise.
\end{itemize}
With this and the fact that 
\[
\int_{\mathbb R} \mathcal{H}_m(x)\mathcal{H}_n(x)\,dx =\delta_{m,n}\,,
\]
we can now apply the Gram–Schmidt procedure to the vectors $\mathcal{H}_0, \mathcal{H}_1, \dots $ to obtain an orthonormal basis in $W^{1,2}$, which we denote by $(e_k)_{k\in\mathbb N_0}$.
We remark in passing that we can build basis functions in $W^{1,2}$ for $d>1$ by tensorising the above Hermite basis.

\subsection{Numerical example}

We consider the parabolic Cauchy problem in $\R$
\begin{equation}
    \begin{cases}
    Lu + \partial_tu = 0, \quad \text{in } \R \times [0,T)\\
    u(x,T) = \phi (x) , \quad \text{in } \R,
    \end{cases}
\end{equation}
where $0<T<\infty$ and
\begin{equation}
    Lu = \frac12  \partial^2_x u  + c(x) u.
\end{equation}
To simplify, we have set $a=1$, $b=0$ and the forcing term $f=0$ in this numerical example. For the final datum $\phi$, we choose $\phi (x)= x^2$. With these specifications, we aim for learning the non-linear operator mapping
\begin{equation}
    c \mapsto u (\cdot, t).
\end{equation}
where, by \eqref{eq: Feynman-Kac formula} and \eqref{eq: SDE associated to the parabolic Cauchy problem} $u$ is given by 
\begin{equation}
        u(x,t) = \EE \left[ \phi(X_{x,t}(T))\exp\left(\int_t^T c(X_{x,t}(s))\,ds\right)\right]
\end{equation}
and 
\begin{equation}\label{X:x:t:s}
    X_{x,t}(s) = x + \int_t^s \,dW(r) .
\end{equation}
For fixed $x\in \mathbb{R}$, we fit a neural network as introduced in Section \ref{NN:Intro} to learn the map $c \mapsto u (x,t)$ on a compact subset $\mathcal K \subset W^{m,p}$. In our numerics, we let $t=0$ and $T=1$.

Instead of using data points $\{ (c_i,u_i (x,t))\}_{i=1}^{ M_{\text{train}}}$ to train the neural network, we instead fit a neural network to minimize the energy functional
\begin{equation}\label{energy}
g\mapsto \mathbb E\left[\int_{W^{1,2}}\vert \mathcal X(c)-g(c)\vert^2\mu(dc)\right],
\end{equation}
with $\mathcal X(c)$ given by 
\begin{equation}
\mathcal X(c):=\phi(X_{x,t}(T))\exp\left(\int_0^1 c(X_{x,t}(s))\,ds\right), 
\end{equation}
and where $\mu$ is a measure on $\mathcal K$. This approach allows us to appeal to the Uniform Approximation Theorem since we have continuity of the map $c \mapsto u (x,t)$. We refer to \cite[Prop. 2.2]{Beck2021} where this approach has been used first in the finite dimensional case, and
\cite[Lem. 5.4]{FS-paper} for its extension to infinite dimensional spaces. 

Recall from above the basis $(e_k)_{k\in\mathbb N_0}$ of Hermite functions for $W^{1,2}$. We choose the compact set $\mathcal K\subset W^{1,2}$ by
$$\mathcal K:=\{c\in W^{1,2}; c=\sum_{k=1}^{5} a_i e_i, a_i\in[-5,5] \}.$$
We specify a uniform measure $\mu$ on $\mathcal K$ canonically from the classical uniform measure on $[-5,5]^5$. and we trivially extend it to the whole space. In Figure \ref{figure:basis:and:data} we show the first $5$ basis vectors and 5 random samples from $\mathcal K$ (Note that the multiple occurrences of 5 is a coincident and not intentional).
\begin{figure}[t]
    \hfill \subfigure[]{\includegraphics[width=0.45\textwidth]{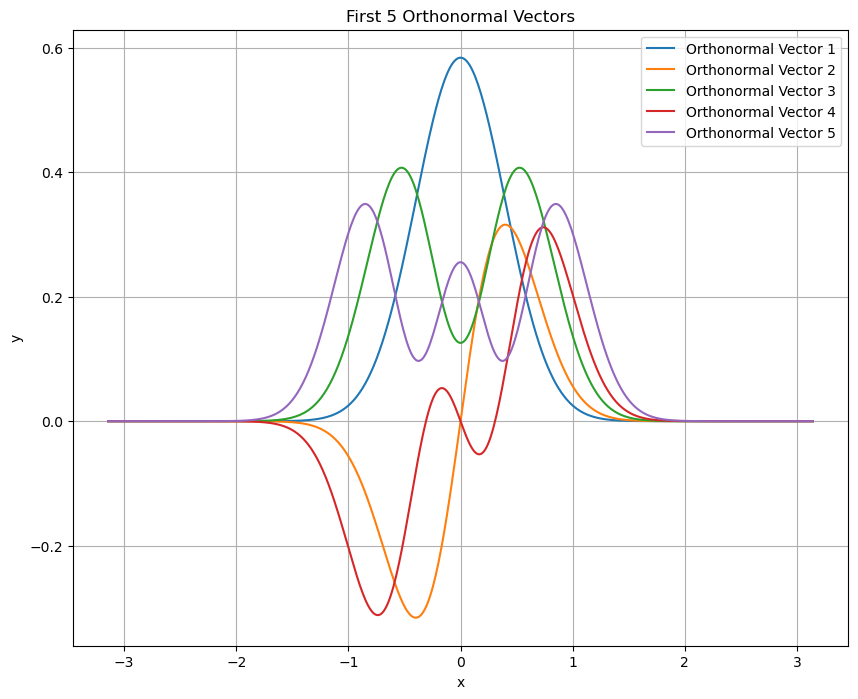}\label{figure:basis}} \hfill\subfigure[]{\includegraphics[width=0.45\textwidth]{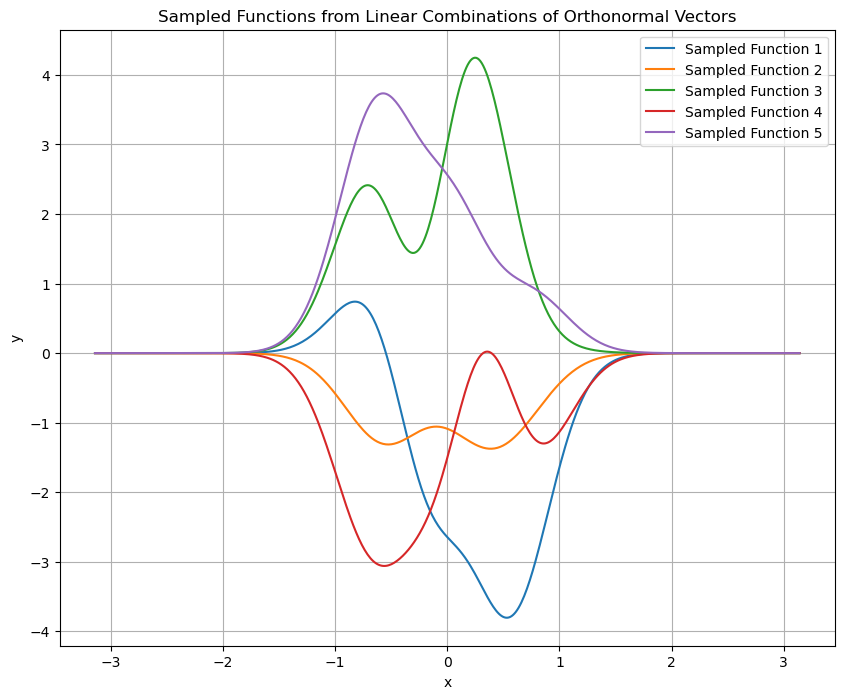}\label{figure:data}} 
\caption{First 5 basis functions $e_1,\ldots,e_5$ (left) and 5 random samples of $c$ from $\mathcal K$ (right)}\label{figure:basis:and:data}
\end{figure}
We fit a Fr\'echet neural network as introduced in Section \ref{NN:Intro} with two layers and $15$ nodes in each layer. For the activation function $\sigma$ we follow Example 4.4 in Benth, Detering and Galimberti \cite{FrechetNN} and specify $\sigma (x)=\beta(\psi (x)) z$ for a $\psi \in \mathcal{L} ( W^{1,2}, \mathbb{R})$, a vector $z\in W^{1,2}$ and $\beta$ is a real-valued Lipschitz continuous function on the real line. 
In particular, we choose   $\beta(y):= \max \{0,1-\exp (-y) \}$,  $\psi(h)=a_1\cdot 0.25 +\dots a_5\cdot 0.25$ for $h=\sum_{k=1}^{\infty} a_i e_i$ and $z=e_1 + \dots + e_5$.

We build a training set of $M_{\text{train}}=5,000,000$ datapoints by first sampling uniformly a vector $c_i$ from $\mathcal K$ for $i=1\dots M_{\text{train}}$, and then, for each $i$, we make an independent draw from the Brownian motion $W$ appearing in \eqref{X:x:t:s}. Based on this draw, we calculate
\begin{equation}
  \mathcal X(c_i):=\phi(X_{x,t}(T))\exp\left[\int_t^T c_i(X_{x,t}(s))\,ds\right].  
\end{equation}
for $x\in \{-1,-0.5,0,0.5,1 \} $. The set $(c_i,\mathcal X(c_i))$ for $i=1, \dots M_{\text{train}}$ constitutes the training set for each $x\in \{-1,-0.5,0,0.5,1 \} $. We train the neural network with $25$ epochs and a batch size of $10,000$. We denote the resulting neural network by $\mathcal{N}^x_\sigma$.

To test the accuracy of our network, we generate a test set of size $M_{\text{test}}=10,000$. For this, we first randomly sample functions $\tilde{c}_i,i=1,\dots, M_{\text{test}}$ from $\mathcal K$. For each of these samples we now calculate $u(x,t,\tilde{c}_i)$ for $x\in \{-1,-0.5,0,0.5,1 \}$ based on Monte Carlo simulation with $10,000$ paths. Note in passing that we include $\tilde{c}_i$ in the argument of $u$ to emphasize the dependency, slightly abusing the notation. We consider $(\tilde{c}_i,u(x,t,\tilde{c}_i)),i=1,\dots, M_{\text{test}}$ as examples from the ground truth. Next, we calculate the mean square error of the neural network predictions with respect to the ground truth given by 
\[
\frac{1}{M_{\text{test}}} \sum_{i=1}^{M_{\text{test}}} (\mathcal{N}_\sigma (\tilde{c}_i) - u(x,t,\tilde{c}_i))^2.
\]
 In Table \ref{tab:mse_comparison}, first column, we list the mean squared error for the different values of $x$. In Figure \ref{fig:boxplot_standard} we provide the resulting box plots and in Figure \ref{fig:hist_x_neg1}- \ref{fig:hist_x_1} the histograms of the distributions of the errors $\mathcal{N}_\sigma (\tilde{c}_i) - u(x,t,\tilde{c}_i) $. We stress that we essentially fix $5$ separate neural networks for each $x$ in this proof-of-concept case study. 

Next, for comparison, we fit a DeepONet structure to learn the map $c \mapsto u (t, x , c)$ via minimizing the energy functional \eqref{energy} based on the samples $(c_i,\mathcal X(c_i))$ for $i=1, \dots M_{\text{train}}$.  We use a 2-layer DeepONet with a branch net of $50$ nodes and a trunk net of $50$ nodes. We choose a ReLU (rectified linear unit) activation function. The DeepONet has $6,301$ parameters, comparable to the number of parameters of our Fr\'echet network ($6,500$ parameters). Because the DeepONet requires sampling of $c_i$ on a grid, we evaluate each $c_i$ from the training set on an equally spaced grid $\{y_1, \dots , y_{20} \}$ of size $20$. The training set for the DeepONet is composed of the set $\{ ((\bm{c}_i,x_j),u(t,x_j,c_i)) \}_{i=1, \dots M_{\text{train}}, 
 1\leq j\leq 5}$ 
where $\bm{c}_i=(c_i(y_1),\dots, c_i(y_{20}))$ and $(x_1,x_2,x_3,x_4,x_5)=(-1,-0.5,0,0.5,1)$. We train the network again with $25$ epochs and a batch size of $10,000$. We denote the resulting neural network by $\mathcal{N}^{DON}_\sigma$. Now we take the same test set as before and evaluate it on the grid, i.e., calculate $\{((\tilde{\bm{c}}_i,x_j),u(t,x_j,c_i))\}_{i=1, \dots M_{\text{test}},1\leq j\leq 5}$. The mean squared errors are presented in the right column of Table \ref{tab:mse_comparison}. The error distributions are displayed as a box plot in Figure\ref{fig:boxplot_DON} and as histograms in Figures \ref{fig:DON_hist_x_neg1}-\ref{fig:DON_hist_x_1}. 

Overall we observe an error of similar magnitude for both architectures. For all values of $x$ the mean square error is slightly lower for the Fr\'echet neural network, except for $x=0.5$, where it is lower for DeepONet. We further observe that the error distribution for the Fr\'echet neural network is more symmetric while for DeepONet it is heavily skewed. We stress that the DeepONet is basically trained on a training set $5$ times larger than the training set for the Fr\'echet neural network. This is due to the nature of DeepONet approximating the map $c \mapsto u (t, \cdot , c)$, i.e., it learns the entire solution function $u (t, \cdot , c)$. This requires to feed in the argument $x$, at which $u (t, \cdot , c)$ is to be evaluated, resulting in the training set $\{ ((\bm{c}_i,x_j),u(t,x_j,c_i)) \}_{i=1, \dots M_{\text{train}}, 
 1\leq j\leq 5}$.  In contrast to Fr\'echet neural network which is separately trained for each $x\in\{-1,-0.5,0,0.5,1\}$, DeepONet can therefore make use of information across different values of $x$ in the learning process. 

As evident from the theory covered in Section \ref{NN:Intro} and \ref{sect:cont}, it is possible to learn the entire solution $u (t, \cdot , c)$ with the Fr\'echet neural network structure. In fact, we know from Section \ref{sect:cont} that the non-linear and continuous operator $c \mapsto u (t, \cdot , c)$ is a continuous operator from $W^{m,p}$ to $C(K)$. Because $C(K)$ naturally embeds continuously into $L^2(K)$, we can actually see this operator as an operator from $W^{m,p}$ to $L^2(K)$ where it is still continuous by composition. We can now choose any orthonormal basis in $L^2(K)$ to represent the solution $u (t, \cdot , c)$. For example if $K = [0,1]^n$, then we can use tensor products of sinus and cosinus functions. We can compute these Fourier coefficients simply by evaluating numerical integrals, without computing derivatives. With the Fourier coefficients and the sinus and cosinus basis functions, structural information about the solution $u (t, \cdot , c)$ could be used. We expect a significant improvement in the learning for Fr\'echet neural networks in this case. We leave a full scale numerical analysis of this approach for future research.

\begin{table}[h]
    \centering
    \begin{tabular}{|c|c|c|}
        \hline
        $x$ Value & Fr\'echet NN & DeepONet \\
        \hline
        $x=-1$ & 0.011 & 0.047 \\
        $x=-0.5$ & 0.040 & 0.147 \\
        $x=0$ & 0.061 & 0.069 \\
        $x=0.5$ & 0.042 & 0.023 \\
        $x=1$ & 0.016 & 0.033 \\
        \hline
    \end{tabular}
    \caption{Mean Squared Error for various $x$ values across two methods, rounded to $10^{-3}$.}
    \label{tab:mse_comparison}
\end{table}

\begin{figure}[t]
    \centering
    \subfigure[Box plot Error Distribution (Fr\'echet NN)]{\includegraphics[width=0.45\textwidth]{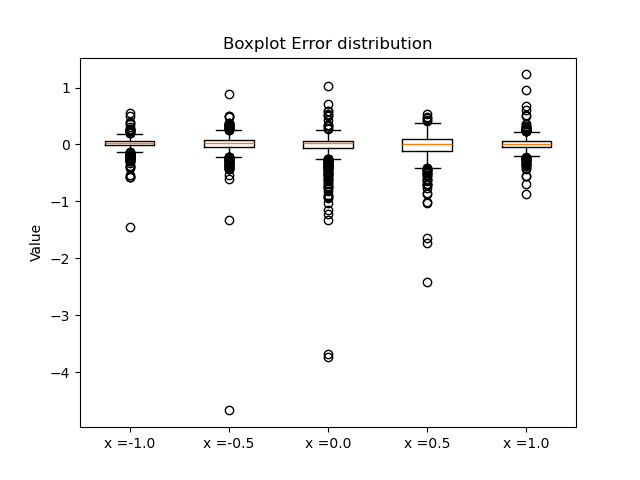} \label{fig:boxplot_standard}}
    \hfill
    \subfigure[Box plot Error Distribution (DON)]{\includegraphics[width=0.45\textwidth]{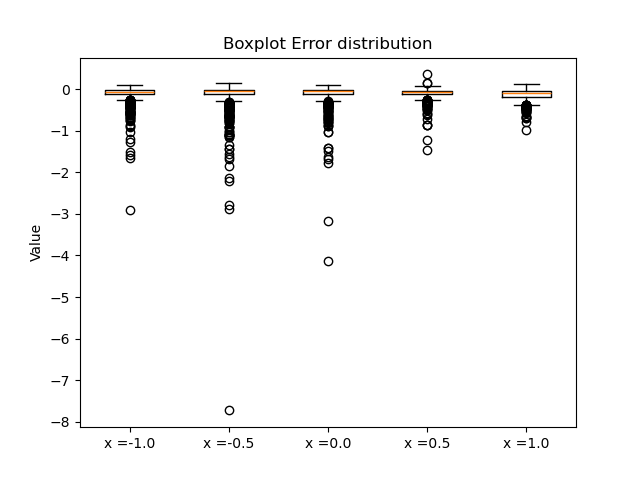} \label{fig:boxplot_DON}}
    
    \caption{Box plots of the error distributions. The left figure shows the result from the Fr\'echet neural network, and the right figure the results with DeepONet.}
    \label{fig:boxplot_comparison}
\end{figure}

\begin{figure}[t]
    \hfill
    \subfigure[$x=-1$]{\includegraphics[width=0.18\textwidth]{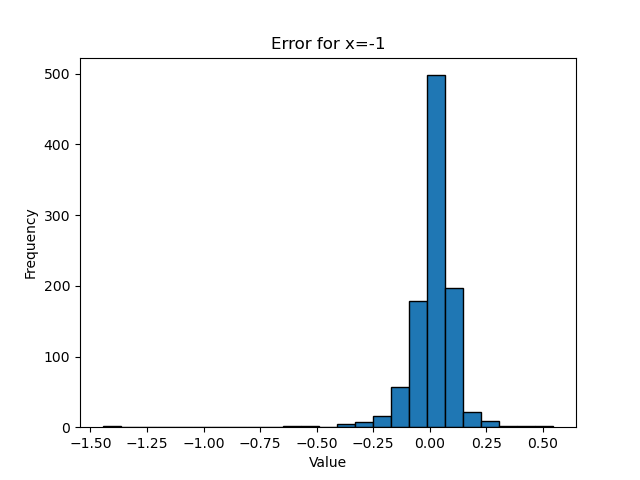} \label{fig:hist_x_neg1}}
    \hfill
    \subfigure[$x=-0.5$]{\includegraphics[width=0.18\textwidth]{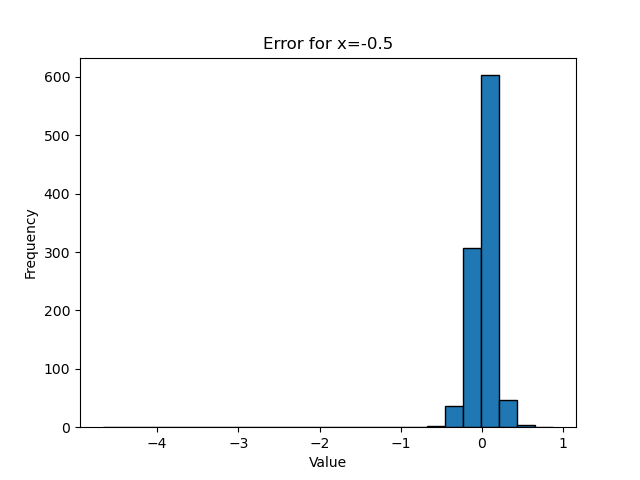} \label{fig:hist_x_neg0.5}}
    \hfill
    \subfigure[$x=0$]{\includegraphics[width=0.18\textwidth]{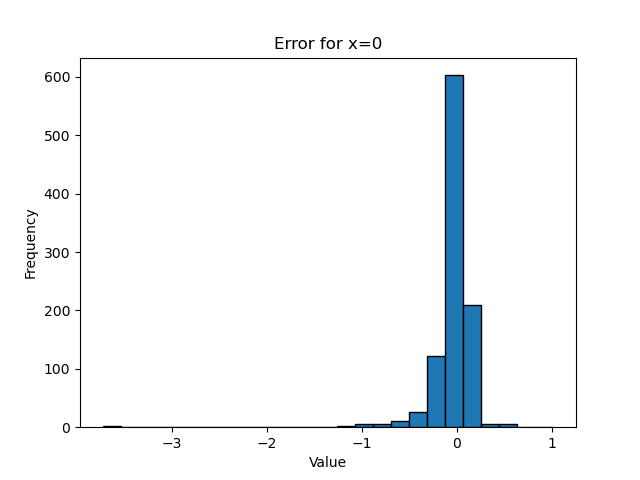} \label{fig:hist_x_0}}
    \hfill
    \subfigure[$x=0.5$]{\includegraphics[width=0.18\textwidth]{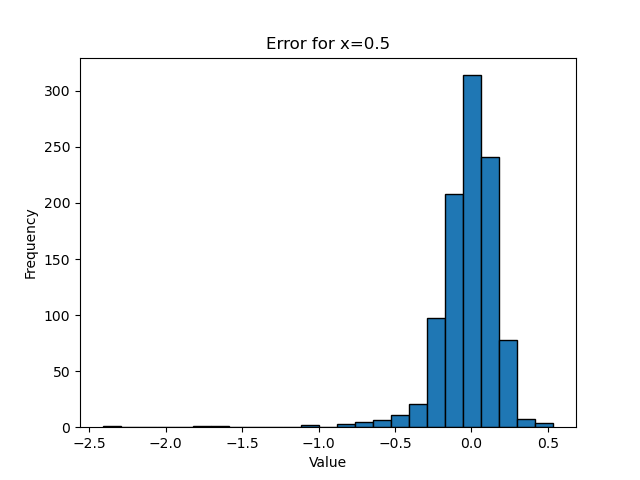} \label{fig:hist_x_0.5}}
    \hfill
    \subfigure[$x=1$]{\includegraphics[width=0.18\textwidth]{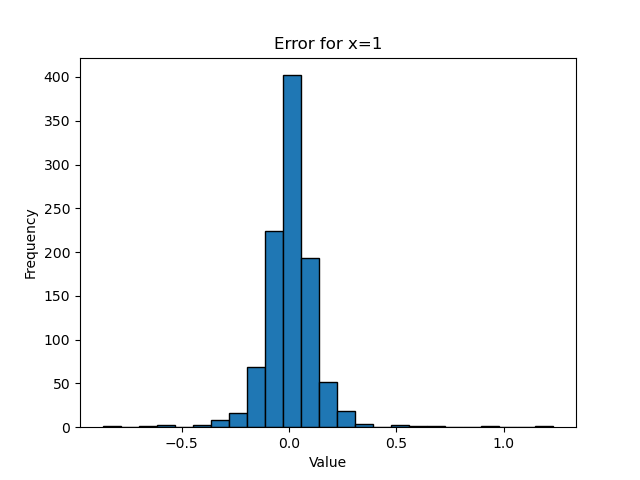} \label{fig:hist_x_1}}
    \hfill
    
    \vspace{1em} 
    \hfill
    \subfigure[$x=-1$]{\includegraphics[width=0.18\textwidth]{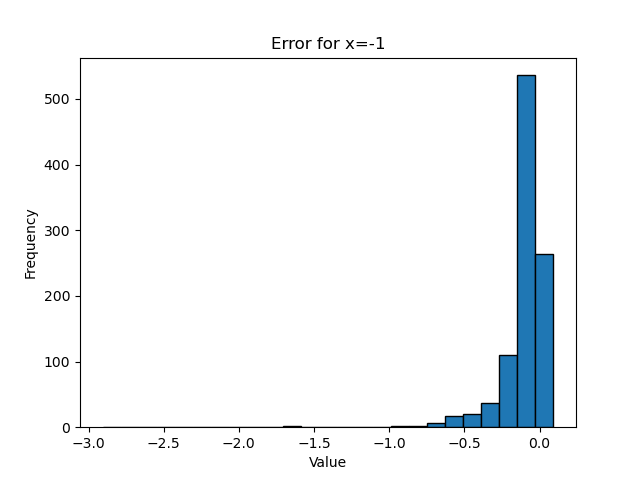} \label{fig:DON_hist_x_neg1}}
    \hfill
    \subfigure[$x=-0.5$]{\includegraphics[width=0.18\textwidth]{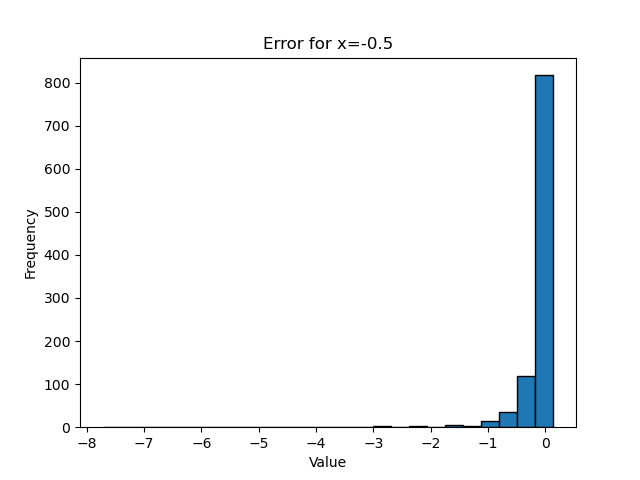} \label{fig:DON_hist_x_neg0.5}}
    \hfill
    \subfigure[$x=0$]{\includegraphics[width=0.18\textwidth]{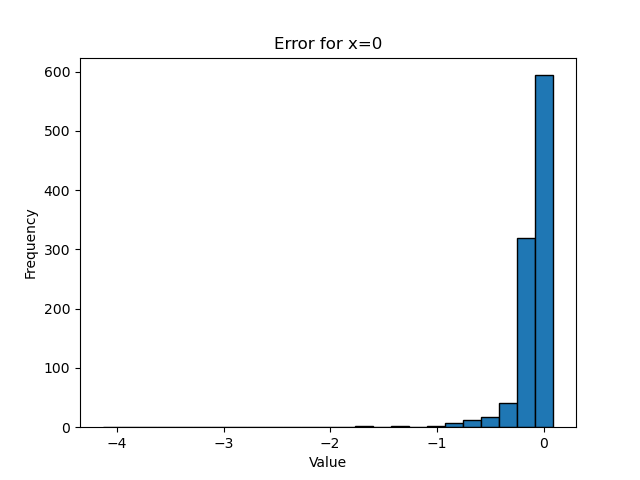} \label{fig:DON_hist_x_0}}
    \hfill
    \subfigure[$x=0.5$]{\includegraphics[width=0.18\textwidth]{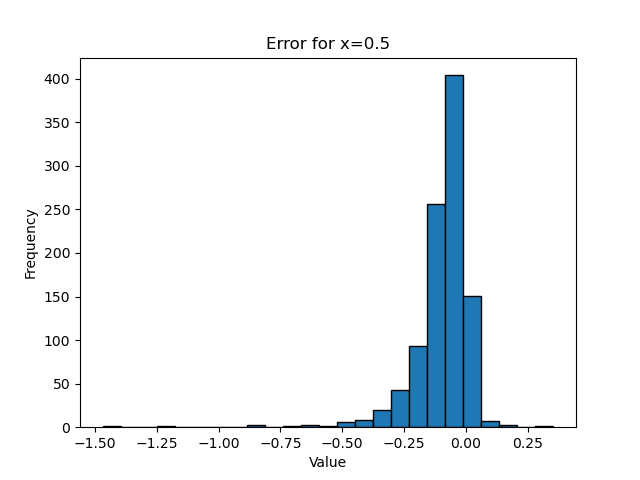} \label{fig:DON_hist_x_0.5}}
    \hfill
    \subfigure[$x=1$]{\includegraphics[width=0.18\textwidth]{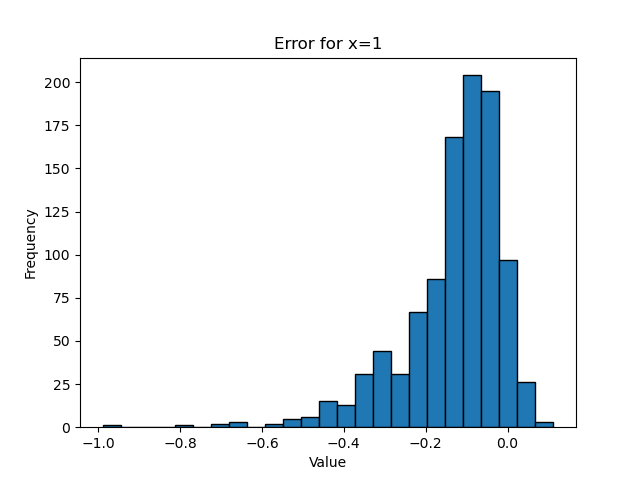} \label{fig:DON_hist_x_1}}
    \hfill

    \caption{Comparison of error distributions for various values of $x$. The first row shows the result from the Fr\'echet neural network, and the second row shows the results with DeepONet.}
    \label{fig:histograms_comparison}
\end{figure}

Overall the proof-of-concept case study presented here shows already that using the structural information, being the key in our proposed Fr\'echet neural network architecture, is promising. We believe that our approach can be applied successfully in  many situations where an entire set of partial differential equations needs to be solved at once. For example, in mathematical finance one is interested in pricing derivatives like options (see e.g. \cite{bjork}). 
With $X$ being a model for the market and $\phi$ signifying the payoff function of a derivative at time $T$, the price dynamics of the derivative can be described by $u(t,x;\phi)$ with $x=X(t)$. By learning the operator map $\phi\mapsto u(t,x;\phi)$, one has available a pricing generator for such derivatives and portfolios thereof. The function $c$ is mapping the multivariate process $X$ into an interest rate dynamics in such a context, and from derivatives prices one can consider the inverse problem of re-constructing $c$ from data. Having access to the operator map $(\phi,c)\mapsto u(t,x;\phi,c)$ and its structural representation provides a tool for solving this problem. Moreover, we can price portfolios of derivatives after learning the operator map. 
A damped $L^2$-space is an appropriate space for payoff functions, i.e., $L^2(w)$ with $w(dx)=\exp(-x^2)dx$. Another important problem in option theory is computing the implied volatility. This entails in recovering the covariance function $a$ (the matrix specifying the elliptic operator in \eqref{eq: elliptic operator}) from knowing the prices, given by $u$. I.e., this is the inverse problem for the operator map $a\mapsto u(t,x;a)$. If one is able to specify a suitable space for $a$ as well as showing continuity of the operator map, we can use our framework for this task.    

Random parabolic partial differential equations is another avenue of applications of our operator-learning methodology (see e.g. \cite{NabianMeidani}, who propose a deep neural network architecture to solve high-dimensional random partial differential equations). If one or more of the parameter functions $\phi, c$ or $f$ are random, by knowing the operator $(\phi,c,f)\mapsto u(t,x;\phi,c,f)$ we can efficiently sample from the solution $u$ by drawing random samples of the input functions. By representing both the input functions and the output map in terms of their basis functions, we are indeed sampling the loadings of the basis expansion of the input, and using the learned network for the output loadings. We believe this is a fruitful approach for uncertainty quantification, in particular for high dimensional problems.

\bibliographystyle{agsm}
\bibliography{bio}  

\end{document}